\documentclass[11pt]{amsart}

\usepackage{amsmath}
\usepackage{tikz}
\usepackage{amsfonts}
\usepackage{amssymb}
\usepackage{amsthm}
\usepackage{mathdots}
\usepackage{graphicx}
\usepackage{pgf,tikz}
\usepackage{nicematrix}
\graphicspath{{images/}}
\usepackage{hyperref}
\usepackage[capitalize,nameinlink,noabbrev,nosort]{cleveref}
\usepackage{comment}
\crefname{figure}{Figure}{Figures}
\usepackage{float}
\usepackage{caption}
\usepackage{subcaption}

\captionsetup[subfigure]{subrefformat=simple,labelformat=simple}
    
   \setlength{\oddsidemargin}{0in}
\setlength{\evensidemargin}{0in}
\setlength{\textwidth}{6.5in}
\setlength{\textheight}{8in}

\numberwithin{equation}{section}

\newtheorem{thm}{Theorem}[section]
\newtheorem{lem}[thm]{Lemma}

\newtheorem{definition}[thm]{Definition}
\newtheorem{example}[thm]{Example}

\newtheorem{remark}[thm]{Remark}

\crefname{thm}{Theorem}{Theorems}
\crefname{prop}{Proposition}{Propositions}
\crefname{cor}{Corollary}{Corollaries}
\crefname{example}{Example}{Examples}
\crefname{definition}{Definition}{Definitions}
\crefname{lem}{Lemma}{Lemmas}
\crefname{section}{Section}{Sections}
\DeclareMathOperator{\Pf}{Pf}
\DeclareMathOperator{\sgn}{sgn}

\title
[High-dimensional cylindrical, toroidal, Möbius  and Klein grids]
{The monopole-dimer model on high-dimensional cylindrical, toroidal, Möbius  and Klein grids
}
\author{Anita Arora}
\address{Anita Arora, Department of Mathematics, Indian Institute of Science, Bangalore  560012, India.}
\email{anitaarora@iisc.ac.in/ anitaarora2018@iitkalumni.org}

\date{May 2024}

\begin{document}

\begin{abstract}
The dimer (monomer-dimer) model deals with weighted enumeration of perfect matchings (matchings).
The monopole-dimer model is a signed variant of the monomer-dimer model whose partition function is a determinant.
 In 1999, Lu and Wu~\cite{LUWu_1999} evaluated the partition function of the dimer model on two-dimensional grids embedded on a M\"{o}bius strip and a Klein bottle.
 While the partition function of the dimer model has been known for 
the two-dimensional grids with different boundary conditions,
we present a similar product formula for the partition function of the monopole-dimer model on higher dimensional cylindrical and toroidal grid graphs. We also evaluate the same for the three-dimensional M\"{o}bius and Klein grid graphs and show that the formula does not generalise for the higher dimensions. Further, we present a relation between the product formula for the three-dimensional cylindrical and M\"{o}bius grid.
\end{abstract}

\subjclass[2010]{82B20, 82B23, 05A15, 05C70}
\keywords{Monopole-dimer model, Loop-vertex model, Determinantal formula, Boustrophedon labelling, Möbius strip, Klein bottle, Boundary conditions, Grid graphs.}

\maketitle

\section{Introduction}
The \emph{dimer model} is the study of the physical process of adsorption of diatomic molecules (like oxygen) on a solid surface. Its partition function can be interpreted as enumerating weighted perfect matchings in an edge-weighted graph. Kasteleyn~\cite{Kasteleyn1963} solved the problem completely for planar graphs, by showing that the partition function of the dimer model can be written as a Pfaffian of a certain adjacency matrix built using a special class of orientations called Pfaffian orientations on the graph. One immediate consequence of Kasteleyn's result is that the Pfaffian is unaffected by the choice of orientation on the planar graph.
For the case of two-dimensional grid graphs $Q_{m,n}$, Kasteleyn~\cite{KASTELEYN19611209}
and Temperley--Fisher \cite{Fisher,TemperleyFisher} independently gave an explicit product formula for the partition function. 
 For example, when $m$ and $n$ are even,
horizontal (resp. vertical) edges have weight $a$ (resp. $b$), 
 the partition function of the dimer model on $Q_{m,n}$ with free boundary condition can be written as
\begin{equation}
\label{eqn:dimer-pf}
 \prod_{i=1}^{m/2} \prod_{j=1}^{n/2}
\left( 4a^2 \cos^2 \frac{i \pi}{m+1} + 4b^2 \cos^2 \frac{j \pi}{n+1}
\right).
\end{equation}
This formula is remarkable because although each factor is a degree-two polynomial in $a$ and $b$ with coefficients that may not be rational, the product turns out to be a polynomial with nonnegative integer coefficients. In particular, when $a = b = 1$, 
the result is an integer.

A similar product formula for the weighted enumeration of the perfect matchings has been given by McCoy and Wu~\cite{McCoy&Wu_1973} for the cylindrical and toroidal boundary conditions and by Lu and Wu~\cite{LUWu_1999} for the M\"{o}bius and Klein boundary conditions.
Tesler~\cite{TESLER2000} showed that the partition function of the dimer model on graphs embedded on non-orientable surfaces can be enumerated as a linear combination of some Pfaffians.
Brankov and Priezzhev~\cite{BranandPrie1993} gave explicit expressions for the free energy of the dimer model on finite quadratic lattices embedded on a Möbius strip. 

Efforts have been made to generalise and extend the dimer model while preserving this elegant structure. The natural physical generalisation is the \emph{monomer-dimer model}, which represents adsorption of a gas cloud consisting of both monoatomic and diatomic molecules. 
In the more abstract sense, it is the weighted enumeration of all matchings in a graph. 
Heilmann and Lieb~\cite{Heilmann_Lieb_1972} showed that the monomer-dimer model does not exhibit phase transitions.
However, this problem is known to be computationally difficult to handle~\cite{jerrum1987} and the partition function associated with it lacks a simple formula. A lower bound for the partition function of the monomer-dimer model for $d$-dimensional grid graphs has been obtained by Hammersley–Menon~\cite{hammersley-menon-1970} by generalising the method of Kasteleyn and Temperley–Fisher.

In another direction, a signed variant of the monomer-dimer model called the \emph{loop-vertex model} has been introduced by Ayyer~\cite{Ayyer2015ASM} for oriented graphs.
Configurations of the loop-vertex model can be thought of as superposition of two monomer-dimer configurations (matchings) having monomers (unmatched vertices) at the same locations. Consequently, loop-vertex configurations consist of even loops and isolated vertices. The loop-vertex model is less physical because of the presence of signs in their weights. On the other hand, the partition function here can be expressed as a determinant.
Ayyer also provided an orientation-independent interpretation of this model called the monopole-dimer model for planar graphs. This interpretation has been extended by the author and Ayyer~\cite{Arora_Ayyer2023} for the Cartesian product of planar graphs. 

The aim of this paper is to generalise the product formulas for the partition function of the dimer model on two-dimensional grids embedded on different surfaces to higher dimensions. We begin by introducing some notations and background results in \Cref{sec:BG}. We define high-dimensional cylindrical and toroidal grid graphs and compute the partition function of the monopole-dimer model on them in \Cref{sec:HDOrienGrid}. We also give the product formula for the partition function of the monopole-dimer model on the three-dimensional grids with M\"{o}bius and Klein boundary condition in \Cref{thm:PF3grid_Mob,thm:PF3grid_Klein}.  We show that the formulas do not hold for higher dimensions by providing counterexamples in \Cref{exm:CEMobHD,exm:CEKleinHD}. Further, we establish a relationship between three-dimensional grids with cylindrical and M\"{o}bius boundary conditions in \Cref{thm:Cyl_mob_relation}.
\section{Background}
\label{sec:BG}
A \textit{graph} is an ordered pair $G = (V(G), E(G))$, where $V(G)$ is the set of \textit{vertices} of $G$ and $E(G)$ is a collection of two-element subsets of $V(G)$, known as \textit{edges}. 
We will only consider finite undirected labelled graphs.
Unless stated otherwise, graphs will have the natural labelling $\{1,2,\dots,|V(G)|\}$. 
An \textit{orientation} on a graph $G$ is the assignment of arrows to its edges. A graph $G$ with an orientation $\mathcal{O}$ is called an \textit{oriented graph} and will be denoted as $(G,\mathcal{O})$. A \textit{canonical orientation} on a labelled graph is obtained by orienting its edges from a lower labelled vertex to a higher labelled vertex. 
Recall that a \textit{planar} graph is a graph which can be drawn in such a way that no edges will cross each other. Such an embedding of a planar graph is referred as a \textit{plane graph} and it divides the whole plane into regions, each of which is called a \textit{face}.

\begin{definition}
    Let $G_1$ and $G_2$ be two graphs. The \textit{Cartesian product of} $G_1$ and $G_2$ is the graph denoted $G_1 \square G_2$ with vertex set $V(G_1)\times V(G_2)$ and edge set  
\begin{equation*}
\left\{((u_1,u_2),(u'_1,u'_2)) \middle\vert
\begin{aligned}
\text{either } u_1 = u'_1 \text{ and } (u_2,u'_2) \in E(G_2)  \\ 
\text{or } u_2 = u'_2 \text{ and } (u_1,u'_1) \in E(G_1) 
\end{aligned}
\right\}.
\end{equation*}
\end{definition}
The above definition generalises to the \textit{Cartesian product of $k$ graphs} $G_1,\dots, G_k$, denoted $G_1 \square \allowbreak\cdots \square G_k$.
Throughout the text, we will denote the \emph{path} graph and the \emph{cycle} graph on $n$ vertices as $P_n$ and $C_n$, respectively. We write $Q_{n_1,\dots,n_d}$ for the $d$-dimensional grid graph which is the Cartesian product $P_{n_1}\square\cdots \square P_{n_d}$. 

\begin{definition}\cite[Definition~3.5]{Arora_Ayyer2023}
\label{def:ortd Cart}
The \emph{oriented Cartesian product} of naturally labeled oriented graphs $(G_1,\mathcal{O}_1), \allowbreak \dots, (G_k,\mathcal{O}_k)$ 
is the graph $G_1\square \cdots \square G_k$ with orientation $\mathcal{O}$ given 
as follows. For each $i \in [k]$, if $u_i\rightarrow u_i^\prime $ in $\mathcal{O}_i$, then 
$\mathcal{O}$ gives orientation
$(u_1,\dots,u_i,\dots,u_k) \rightarrow (u_1,\dots,u_i^\prime,\dots,u_k)$ if $u_{i+1}+u_{i+2}+\dots+u_k+(k-i)\equiv 0 \pmod 2$ and  $(u_1,\dots,u_i^\prime,\dots,u_k) \allowbreak \rightarrow (u_1,\dots,u_i,\dots,u_k)$ otherwise.
\end{definition}

The graph in \cref{fig:OrGon9} can be thought of as an oriented Cartesian product of path $P_3$ with itself which is labeled consecutively from one leaf to another.

\begin{definition}
An orientation on a plane graph $G$ is said to be \emph{Pfaffian} if it satisfies the property that each simple loop enclosing a bounded face has an odd number of clockwise oriented edges.
This property is also known as the \emph{clockwise-odd property}. 
\end{definition}
\begin{figure}[h]
    \centering
    \includegraphics[]{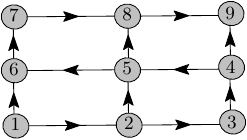}
    \caption{An oriented graph on $9$ vertices.}
    \label{fig:OrGon9}
\end{figure}

For example, the orientation in \cref{fig:OrGon9} is a Pfaffian orientation. Kasteleyn has shown that every plane graph posses a Pfaffian    orientation~\cite{Kasteleyn1963}. 
A \textit{dimer covering} (\emph{perfect matching}) is a collection of edges in the graph $G$ such that each vertex is covered in exactly one edge. 
The set of all dimer coverings of $G$ will be denoted as $\mathcal{M}(G)$.
Let $G$ be an edge-weighted graph on $2n$ vertices with edge-weight $w_e$ for $e \in E(G)$. Then the \textit{dimer model} is the collection of all dimer covers together with the weight of each dimer covering $M\in \mathcal{M}(G)$ given by $w(M)=\prod_{e \in M}w_e$.
The \textit{partition function} of the dimer model on $G$ is then defined as
\begin{equation*}
    \mathrm{Z}_G:=\sum_{M \in \mathcal{M}(G)} w(M),
\end{equation*}
which is basically the weighted enumeration of perfect matchings in $G$.
Before stating Kasteleyn's celebrated result, recall that the \textit{Pfaffian} of $2n \times 2n$ skew-symmetric matrix $A$ is given by
\begin{equation*}
\Pf(A)=\frac{1}{2^n n!} \sum_{\sigma \in  S_{2n}} \sgn(\sigma)\, A_{\sigma_1,\sigma_2} A_{\sigma_3,\sigma_4} \dots A_{\sigma_{2n-1},\sigma_{2n}}
\end{equation*}
and for such a matrix, $\det(A) = \Pf(A)^2$ by Cayley's theorem.

\begin{thm}[Kasteleyn~\cite{Kasteleyn1963}] 
If $G$ is a plane graph with Pfaffian orientation $\mathcal{O}$ and edge weight is $w_e$ for the edge $e=(u,v)$.
Then the partition function of the dimer model on $G$ is given by $\mathrm{Z}_G=\Pf(K_G)$,
where $K_G$ is a signed adjacency matrix defined by
\begin{align*}
    (K_G)_{u,v}&=\begin{cases}
        w_e & \text{if }u\rightarrow v \text{ in }\mathcal{O},\\
      -w_e &\text{if }v\rightarrow u \text{ in } \mathcal{O},\\
        \,0 &\text{otherwise.}
    \end{cases}
\end{align*}
\end{thm}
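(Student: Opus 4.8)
The plan is to expand the Pfaffian combinatorially, recognise its nonzero monomials as exactly the perfect matchings of $G$, and then use the Pfaffian orientation to show that every such monomial carries the same sign.

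First I would expand $\Pf(K_G)$ from its definition. Each term $(K_G)_{\sigma_1,\sigma_2}\cdots(K_G)_{\sigma_{2n-1},\sigma_{2n}}$ is a product of matrix entries indexed by a pairing $\{\sigma_1,\sigma_2\},\dots,\{\sigma_{2n-1},\sigma_{2n}\}$ of the vertex set $\{1,\dots,2n\}$. Since $(K_G)_{u,v}=0$ unless $\{u,v\}\in E(G)$, a term survives exactly when this pairing is a perfect matching $M\in\mathcal{M}(G)$, and the orderings giving the same matching contribute identically thanks to the $\tfrac{1}{2^n n!}$ normalisation. Collecting entries, each surviving term equals $\varepsilon(M)\prod_{e\in M}w_e=\varepsilon(M)\,w(M)$, where $\varepsilon(M)\in\{+1,-1\}$ combines the permutation sign $\sgn(\sigma)$ with the factors $-1$ coming from edges oriented against the chosen ordering. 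Hence $\Pf(K_G)=\sum_{M\in\mathcal{M}(G)}\varepsilon(M)\,w(M)$, and it remains only to prove that $\varepsilon(M)$ is independent of $M$, for then $\Pf(K_G)=\varepsilon\cdot\mathrm{Z}_G$ for a single global sign $\varepsilon$.

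Next I would compare two matchings $M,M'$ through their superposition. The symmetric difference $M\triangle M'$ is a disjoint union of even cycles $C_1,\dots,C_r$ alternating between $M$- and $M'$-edges, together with doubled edges that do not affect the relative sign. A standard permutation computation shows that $\varepsilon(M)/\varepsilon(M')$ factorises into one independent contribution per cycle $C_j$, and that the contribution of a single alternating cycle $C$ equals $+1$ precisely when the number of edges of $C$ whose $\mathcal{O}$-orientation agrees with a fixed (clockwise) traversal of $C$ is odd. So the theorem reduces to a parity statement about clockwise edges along cycles, which I expect to be the main obstacle.

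The crux is the following planar lemma. For a simple cycle $C$, let $c(C)$ be the number of its edges oriented clockwise along $C$, and let $v(C)$ be the number of vertices strictly inside $C$. Summing the clockwise-odd property over all bounded faces enclosed by $C$, and noting that each strictly interior edge is clockwise for exactly one of the two faces it bounds, gives $c(C)\equiv f+E_{\mathrm{int}}\pmod 2$, where $f$ and $E_{\mathrm{int}}$ count the enclosed faces and strictly interior edges. Euler's formula applied to the subgraph bounded by $C$ gives $f=E_{\mathrm{int}}-v(C)+1$, whence $c(C)\equiv v(C)+1\pmod 2$. Finally, planarity forces $v(C)$ to be even for every alternating cycle $C$ of $M\triangle M'$: the vertices of $C$ are matched along $C$ itself, and no matching edge can join an interior to an exterior vertex without crossing $C$, so the interior vertices are matched among themselves. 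Therefore $c(C)$ is odd for each cycle, every cyclic contribution is $+1$, and $\varepsilon(M)=\varepsilon(M')$. Thus all signs coincide; the common global sign is a single $\varepsilon\in\{+1,-1\}$, and under the conventions of the statement it is $+1$, giving $\mathrm{Z}_G=\Pf(K_G)$ as claimed.
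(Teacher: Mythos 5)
The paper does not prove this theorem: it is quoted from Kasteleyn~\cite{Kasteleyn1963} as a background result and used as a black box, so there is no internal proof to compare against. Your argument is the standard proof of Kasteleyn's theorem, and its structure is sound: the Pfaffian expansion identifying the surviving terms with perfect matchings, the reduction of sign-constancy to the alternating cycles of $M\triangle M'$, the face-summation combined with Euler's formula giving $c(C)\equiv v(C)+1\pmod 2$, and the planarity argument that the vertices strictly inside an alternating cycle are matched among themselves, forcing $v(C)$ to be even. Two small points deserve attention. First, the Euler count $f=E_{\mathrm{int}}-v(C)+1$ assumes that the subgraph consisting of $C$ together with everything drawn inside it is connected; in general Euler's formula gives $f=E_{\mathrm{int}}-v(C)+k$ with $k$ the number of components, and the usual way to sidestep this is to prove the cycle lemma by induction on the number of enclosed faces, or to work in the connected (indeed $2$-connected) setting in which the clockwise-odd property on face boundaries is cleanly posed. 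Second, your argument establishes only that all matchings carry a common sign $\varepsilon$, hence $\Pf(K_G)=\varepsilon\,\mathrm{Z}_G$; the final assertion that ``under the conventions of the statement it is $+1$'' is not a proof, and in general the correct conclusion is $\mathrm{Z}_G=\left|\Pf(K_G)\right|$. This looseness is inherited from the statement itself and is harmless for the paper, which only ever uses determinants, i.e.\ squares of Pfaffians, but you should either supply a normalisation argument pinning down $\varepsilon$ or state the conclusion up to sign.
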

McCoy and Wu obtained a product formula for the two-dimensional grid graphs embedded on a cylinder and a torus similar to the one by Kasteleyn and Temperley-Fisher's formula in \eqref{eqn:dimer-pf} for the two-dimensional grid graphs with free boundary.

\begin{thm}[~\cite{McCoy&Wu_1973}]
\label{thm:DM_Cyl_Tor}
The partition function of the dimer model on the two-dimensional grid graph $Q_{2m,2n}$ where horizontal (resp. vertical) edges have weight $a$ (resp. $b$) with cylindrical and toroidal boundary conditions is given by
\begin{equation}
\label{eqn:2dCyl}
    \mathrm{Z}_{Q_{2m,2n}}^{\text{Cyl}}= \prod_{i=1}^{m} \prod_{j=1}^{n}
\left( 4a^2 \sin^2 \frac{(2i-1) \pi}{2m} + 4b^2 \cos^2 \frac{j \pi}{2n+1}
\right), 
\end{equation}
and 
\begin{equation}
\label{eqn:2dTor}
    \mathrm{Z}_{Q_{2m,2n}}^{\text{Tor}}= \prod_{i=1}^{m} \prod_{j=1}^{n}
\left( 4a^2 \sin^2 \frac{(2i-1) \pi}{2m} + 4b^2 \sin^2 \frac{(2j-1) \pi}{2n}
\right),
\end{equation}
respectively.
\end{thm}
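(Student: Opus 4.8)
The plan is to express each partition function as a Pfaffian via Kasteleyn's theorem and then to diagonalise the corresponding Kasteleyn matrix by exploiting the Cartesian-product structure of the grid. Write the cylindrical grid as $C_{2m}\square P_{2n}$ and the toroidal grid as $C_{2m}\square C_{2n}$, with horizontal weight $a$ attached to the first factor and vertical weight $b$ to the second. In the Kasteleyn orientation the clockwise-odd property forces the two factors to enter the signed adjacency matrix $K$ with a relative factor of $\sqrt{-1}$; consequently, in the common eigenbasis of the two commuting factor-operators, each (purely imaginary) eigenvalue of $K$ has squared modulus of the shape $4a^2 c_1^2 + 4b^2 c_2^2$, where $c_1$ and $c_2$ are the spectral parameters of the two factors. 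Since $K$ is real skew-symmetric, these eigenvalues occur in conjugate pairs, $\det K=\Pf(K)^2$ is the product of the squared moduli over a set of representatives, and $\mathrm{Z}=\Pf(K)$ is the positive square root. Thus the whole computation reduces to identifying the spectra $\{c_1\}$ and $\{c_2\}$ dictated by the two boundary conditions.

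For the cylindrical grid the underlying surface is an annulus and hence planar, so a single Pfaffian orientation exists and Kasteleyn's theorem applies verbatim. The one subtlety is that imposing the clockwise-odd condition on the faces that straddle the seam of the cylinder forces the cyclic direction to carry \emph{antiperiodic} boundary conditions: the relevant spectrum for $C_{2m}$ is then indexed by the $2m$-th roots of $-1$ and yields $c_1\in\{\sin\frac{(2i-1)\pi}{2m}:i=1,\dots,m\}$, whereas the free path $P_{2n}$ contributes the Dirichlet spectrum $c_2\in\{\cos\frac{j\pi}{2n+1}:j=1,\dots,n\}$. Combining these through the squared-modulus shape above and multiplying over the representative ranges $i\le m$, $j\le n$ reproduces \eqref{eqn:2dCyl} after extracting the square root.

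The toroidal grid is genuinely harder, because $C_{2m}\square C_{2n}$ embeds on a genus-one surface and therefore admits \emph{no} global Pfaffian orientation. Here I would use Kasteleyn's torus correction, writing the partition function as a signed combination $\tfrac12\bigl(-\Pf K_{00}+\Pf K_{01}+\Pf K_{10}+\Pf K_{11}\bigr)$ of four Pfaffians whose orientations differ by a periodic ($0$) or antiperiodic ($1$) sign in each cyclic direction. Each $\det K_{\alpha\beta}$ factorises exactly as in the product argument above: a periodic cycle contributes a spectrum of $\sin\frac{i\pi}{m}$-type parameters and an antiperiodic cycle the $\sin\frac{(2i-1)\pi}{2m}$-type parameters, and likewise in the second direction. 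The fully antiperiodic sector $K_{11}$ produces precisely the factors $4a^2\sin^2\frac{(2i-1)\pi}{2m}+4b^2\sin^2\frac{(2j-1)\pi}{2n}$ appearing in \eqref{eqn:2dTor}.

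I expect the principal obstacle to be the sign bookkeeping that turns the four-Pfaffian combination into the single product \eqref{eqn:2dTor}. Concretely one must (i) pin down the relative signs of the four Pfaffians with respect to a common reference orientation---equivalently, identify the four induced spin structures on the torus---and (ii) show that the periodic sectors cancel or combine so that only the fully antiperiodic spectrum survives, after which the same $\lambda\mapsto-\lambda$ pairing and elementary $\sin$-product identities used for the cylinder finish the job. Step (i) is the delicate part, since a mis-tracked global sign destroys both the collapse and the nonnegativity of the resulting coefficients. As an independent check on these signs I would run the transfer-matrix (free-fermion) diagonalisation originally employed by McCoy and Wu, in which the (anti)periodicity of the trace across the seam produces the half-angle arguments $\frac{(2i-1)\pi}{2m}$ and $\frac{(2j-1)\pi}{2n}$ directly and transparently.
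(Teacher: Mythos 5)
This statement is quoted from McCoy and Wu and the paper offers no proof of it, so there is nothing internal to compare against; I can only assess your sketch on its own terms. Your cylinder argument is essentially the standard one and is sound: the annulus is planar, Kasteleyn's theorem applies, and the seam faces force the antiperiodic sign pattern, i.e.\ the cyclic factor enters the Kasteleyn matrix as $T_{2m}(-a,x,a)+a\,\mathrm{adiag}(1,0,\dots,0,-1)$, whose spectrum $2\iota a\sin\frac{(2i-1)\pi}{2m}$ is exactly what \Cref{lem:UTVn} records; combined with the Dirichlet spectrum of $P_{2n}$ this yields \eqref{eqn:2dCyl}. This is also in the same spirit as the unitary-transform factorisations the paper itself uses in \Cref{sec:HDOrienGrid}.

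The torus part, however, has a genuine gap. You correctly observe that $C_{2m}\square C_{2n}$ admits no single Pfaffian orientation and set up the four-Pfaffian combination $\tfrac12(-\Pf K_{00}+\Pf K_{01}+\Pf K_{10}+\Pf K_{11})$, but the entire content of \eqref{eqn:2dTor} is that this signed sum of four \emph{distinct} products collapses to the single fully antiperiodic product, and you only conjecture this ("show that the periodic sectors cancel or combine"). It is not automatic: for the even-by-even torus one has $\Pf K_{00}=0$, but $\Pf K_{01}$, $\Pf K_{10}$ and $\Pf K_{11}$ are genuinely different products (over mixed integer/half-integer spectra), and proving that $\tfrac12(\Pf K_{01}+\Pf K_{10}+\Pf K_{11})$ equals the right-hand side of \eqref{eqn:2dTor} requires nontrivial product identities (or the transfer-matrix route you mention as a "check", which would then have to carry the proof). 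Until that step is supplied, the toroidal formula is asserted rather than derived. Note also that this difficulty is specific to the dimer model; the paper's own toroidal result (\Cref{thm:PFdgrid_Tor}) sidesteps it entirely because the loop-vertex/monopole-dimer partition function is a single determinant by \Cref{thm:pf-lvmodel}, with no Pfaffian combination needed.
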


\begin{figure}[h!]
\begin{subfigure}{.32\textwidth}
  \centering
\includegraphics[width=0.8\linewidth]{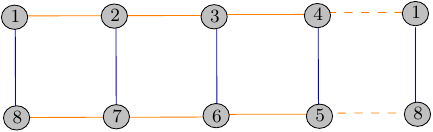}
  \caption{$4\times 2$ grid on a cylinder}
  \label{fig:2dCyl}
\end{subfigure}
\begin{subfigure}{.32\textwidth}
  \centering
  \includegraphics[width=0.8\linewidth]{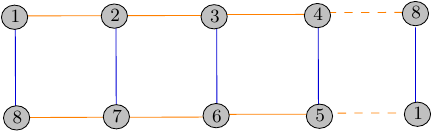}
  \caption{$4\times 2$ grid on a M\"{o}bius strip}
  \label{fig:2dMob}
\end{subfigure}
\begin{subfigure}{.32\textwidth}
  \centering
  \includegraphics[width=0.8\linewidth]{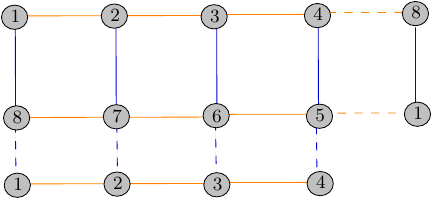}
  \caption{$4\times 2$ grid on a Klein}
  \label{fig:2dklein}
\end{subfigure}
\caption{2D grid with different boundary conditions}
\label{fig:2Dgrids_BC}
\end{figure}
Lu and Wu have obtained the similar closed-form expressions for the partition function of the dimer model on $2m\times 2n$ grids embedded on non-orientable surfaces like M\"{o}bius strip and Klein bottle.
\begin{thm}~\cite{LUWu_1999}
\label{thm:DM_mob_klein}
Let $Q_{2m,2n}=P_{2m}\square P_{2n}$ denote the two-dimensional grid graph with horizontal (resp. vertical) edges having weight $a$ (resp. $b$). The partition function of the dimer model on $Q_{2m,2n}$ embedded on a M\"{o}bius strip and on a Klein bottle is given by
\begin{equation}
\label{eqn:2dMob}    \mathrm{Z}_{Q_{2m,2n}}^{\text{M\"{o}b}}= \prod_{i=1}^{m} \prod_{j=1}^{n}
\left( 4a^2 \sin^2 \frac{(4i-1) \pi}{4m} + 4b^2 \cos^2 \frac{j \pi}{2n+1}
\right),
\end{equation}
and
\begin{equation}
\label{eqn:2dKlein}    \mathrm{Z}_{Q_{2m,2n}}^{\text{Klein}}= \prod_{i=1}^{m} \prod_{j=1}^{n}
\left( 4a^2 \sin^2 \frac{(4i-1) \pi}{4m} +  4b^2 \sin^2 \frac{(2j-1) \pi}{2n}
\right),
\end{equation}
respectively.   
\end{thm}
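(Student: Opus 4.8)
The plan is to derive both formulas by the Kasteleyn--Pfaffian method combined with a direction-by-direction diagonalisation, organised so as to reuse the cylinder/torus computation behind \cref{thm:DM_Cyl_Tor}. The guiding observation is structural: the M\"obius formula \eqref{eqn:2dMob} differs from the cylinder formula \eqref{eqn:2dCyl} only in the weight-$a$ factor, where $\sin^2\tfrac{(2i-1)\pi}{2m}$ is replaced by $\sin^2\tfrac{(4i-1)\pi}{4m}$ while the weight-$b$ factor is unchanged, and \eqref{eqn:2dKlein} relates to \eqref{eqn:2dTor} in exactly the same way. Thus it suffices to isolate the glued length-$2m$ (weight-$a$) direction and show that the only effect of the non-orientable identification is to modify the boundary condition on the $m$-momenta, leaving the transverse weight-$b$ spectrum untouched.

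First I would write down the signed adjacency (Kasteleyn) matrix $K$ of the grid drawn on the surface. Because these surfaces are non-orientable, Kasteleyn's single-Pfaffian theorem~\cite{Kasteleyn1963} does not apply directly; following Tesler~\cite{TESLER2000} one expresses $Z$ as a fixed signed combination of Pfaffians $\Pf(K_\varepsilon)$, indexed by the admissible sign choices $\varepsilon$ on the gluing seam. After a suitable sign gauge each $K_\varepsilon$ has (schematically) the tensor form $K_\varepsilon \sim a\,(S_{2m}\otimes I_{2n}) + b\,(I_{2m}\otimes T_{2n})$ with $S_{2m},T_{2n}$ skew-symmetric difference operators. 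Since the seam lies in the weight-$a$ factor, diagonalising the transverse weight-$b$ direction is independent of $\varepsilon$ and reproduces exactly the cylinder/torus spectra: the free path $T_{2n}$ gives momenta $\tfrac{j\pi}{2n+1}$ and the factors $4b^2\cos^2\tfrac{j\pi}{2n+1}$ for the M\"obius case, while the antiperiodic cycle (forced by the Pfaffian orientation) gives momenta $\tfrac{(2j-1)\pi}{2n}$ and the factors $4b^2\sin^2\tfrac{(2j-1)\pi}{2n}$ for the Klein case. This reduces the problem, for each fixed $j$, to a $2m\times 2m$ eigenvalue problem in the glued direction.

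The heart of the argument is this glued weight-$a$ direction. Transferring column by column along the length-$2m$ axis, each layer is a column of $2n$ sites indexed by the transverse coordinate, and the M\"obius/Klein gluing returns the last column to the first but with the transverse coordinate reflected by an involution $R$; in the transfer-matrix picture the periodic trace of the cylinder/torus is thereby replaced by a trace twisted by $R$. On each $R$-eigenspace ($R=\pm1$) this reflection combines with the seam sign $\varepsilon$ and with the loop sign that a Pfaffian orientation already forces, and the net effect is to shift the quantisation offset of the $m$-momentum: instead of the antiperiodic value $\theta_i=\tfrac{(i-1/2)\pi}{m}=\tfrac{(2i-1)\pi}{2m}$ of the cylinder, the admissible momenta carry a quarter-integer offset $\theta_i=\tfrac{(i-1/4)\pi}{m}=\tfrac{(4i-1)\pi}{4m}$, producing the factors $4a^2\sin^2\tfrac{(4i-1)\pi}{4m}$. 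Establishing this offset precisely is where I expect the real work to lie: one must solve the twisted characteristic equation carefully, track the interplay between the geometric reflection $R$ and the orientation-induced signs, and verify that exactly $m$ momenta $\theta_1,\dots,\theta_m$ survive with no spurious solutions. This is the main obstacle.

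Finally I would reassemble the pieces. Each Pfaffian $\Pf(K_\varepsilon)$ is now a product of the $\varepsilon$-independent weight-$b$ factors with the twisted weight-$a$ factors, and the last step is to show that for these highly symmetric grids the signed Tesler combination over $\varepsilon$ collapses to the single product obtained from the offset-$\tfrac14$ spectrum, rather than leaving genuine cross terms. Collecting the $i$- and $j$-factors then yields \eqref{eqn:2dMob} and \eqref{eqn:2dKlein}, the only difference between the two being inherited from the free versus antiperiodic weight-$b$ spectrum, exactly as in the passage from \eqref{eqn:2dCyl} to \eqref{eqn:2dTor}.
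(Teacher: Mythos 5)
The paper does not prove this theorem; it is quoted verbatim from Lu and Wu~\cite{LUWu_1999} as background, so there is no internal proof to compare against and your proposal has to stand on its own. As an outline it follows the historically correct route (Kasteleyn matrix, a Tesler-type signed combination of Pfaffians for the non-orientable surface, direction-by-direction diagonalisation), but the two steps you yourself defer as ``the real work'' are the entire content of the proof, and one of the simplifying claims you use to set them up is wrong as stated. The transverse diagonalisation is \emph{not} independent of the seam: the M\"obius identification joins $(2m,k)$ to $(1,2n-k+1)$, so in tensor form the seam contribution is $a\,(\text{seam matrix})_{2m}\otimes J_{2n}$ with $J_{2n}$ the antidiagonal reflection. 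The unitary $u_{2n}$ that diagonalises the transverse Toeplitz operator $T_{2n}$ does not diagonalise $J_{2n}$; it sends it to another antidiagonal matrix (see \eqref{Eqn:UTforToeplitz}), which couples the transverse momentum $j$ with its mirror $2n+1-j$. Consequently you do not get an independent $2m\times 2m$ problem for each fixed $j$; you get $2\times2$ blocks mixing $j$ and $2n+1-j$, and the quarter-integer offset $\tfrac{(4i-1)\pi}{4m}$ only emerges after resolving those blocks. This is exactly the structural difficulty the present paper confronts in its three-dimensional M\"obius and Klein proofs, where it is handled by reducing to block matrices $\bigl(\begin{smallmatrix} A & B \\ -\overline{B} & \overline{A}\end{smallmatrix}\bigr)$ and invoking \Cref{lem:det2by2Block} to pass to $\det(AA^*+BB^*)$.

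The second gap is the final ``collapse'' of the signed combination of Pfaffians. For a non-orientable surface each individual Pfaffian $\Pf(K_\varepsilon)$ evaluates to a product over a \emph{different} momentum lattice, and the identity that turns their signed (in Lu--Wu's case, complex-coefficient) combination into the single product over $\tfrac{(4i-1)\pi}{4m}$ is a genuine trigonometric/algebraic identity, not bookkeeping; asserting that it ``collapses \ldots rather than leaving genuine cross terms'' without exhibiting the identity leaves the product formula unproved. Until you (i) carry out the coupled $2\times2$ block diagonalisation in the transverse direction and (ii) prove the Pfaffian-combination identity, the proposal is a plausible plan rather than a proof.
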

\Cref{fig:2Dgrids_BC} shows a two-dimensional grid embedded on different surfaces.

In this work, we will generalise 
\Cref{thm:DM_Cyl_Tor,thm:DM_mob_klein} for a more general model called the monopole-dimer model.
Let us first recall the loop-vertex model~\cite{Ayyer2015ASM}. Loops in the configurations will refer to directed cycles in the graph. We assume all the weights are real and positive. Let $G$ be a simple weighted graph on $n$ vertices with an orientation $\mathcal{O}$, vertex-weights $x(v)$ for $v\in V(G)$ and edge-weights $a_{v,v'}\equiv a_{v',v}$ for $(v,v') \in E(G)$. A
 \textit{loop-vertex configuration} $C$ of $G$
is a subgraph of the graph $G$ consisting of 
\begin{itemize}
    \item directed loops of even length (with length $\geq 4$),
    \item doubled edges (which can be thought of as loops of length 2),
    \item isolated vertices,
    \end{itemize}
with the condition that each vertex of $G$ is either covered in exactly one loop or is an isolated vertex. We will denote the set of all loop vertex configurations of $G$ as $\mathcal{L}(G)$. \cref{fig:Example of conf} shows a loop-vertex configuration on the grid graph $Q_{3,3}$ (see \Cref{fig:OrGon9}).
\begin{figure}[h]
\centering
\includegraphics[scale=1.1]{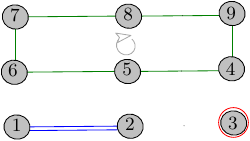}
\caption{A loop-vertex configuration on $Q_{3,3}$ consisting of a doubled edge at $(1,2)$, a directed cycle $(456789)$ and an isolated vertex at $3$. }
\label{fig:Example of conf}
\end{figure}

The \textit{sign} of an edge $(v,v') \in E(G)$, is defined as
\begin{equation}
\label{SE}
   \sgn(v,v'):=\begin{cases}
    1 & \text{ if }v \rightarrow v' \text{ in } \mathcal{O},
   \\ -1 & \text{ if }v' \rightarrow v \text{ in } \mathcal{O}.
    \end{cases}
\end{equation}
Let $\ell=(v_0,v_1,\dots,v_{2k-1},v_{2k}=v_0)$ be a directed even loop in $G$. The 
\textit{weight }of the loop $\ell$ is given by
\begin{equation}
\label{weightofloop}
w(\ell):=-\prod_{i=0}^{2k-1} \sgn(v_i,v_{i+1})\, a_{v_i,v_{i+1}}. 
\end{equation}
A loop-vertex configuration, $C=(\ell_1,\dots,\ell_j; \,v_1,\dots,v_k)$ consisting of loops $\ell_1,\dots,\ell_j$ and isolated vertices $v_1,\dots,v_k$, is given the weight
    \begin{equation}
    \label{eqn:weightofconf}
            w(C)=\prod_{i=1}^j w(\ell_i) \,\,\prod_{i=1}^k x(v_i).
    \end{equation}
Then the \textit{loop-vertex model} on the oriented graph $(G,\mathcal{O})$ is the collection $\mathcal{L}(G)$ where the weight of each configuration, $C\in \mathcal{L}(G)$ is assigned a weight as specified in \eqref{eqn:weightofconf}.
The \textit{(signed) partition function} of the loop-vertex model is defined as
\begin{equation*}
    \mathcal{Z}_{G,\mathcal{O}}:=\sum_{C \in \mathcal{L}(G)}w(C).
\end{equation*}

\begin{thm}\cite[Theorem~2.5]{Ayyer2015ASM}
\label{thm:pf-lvmodel}
The partition function of the loop-vertex model on $(G,\mathcal{O})$ is 
\begin{equation*}
    \mathcal{Z}_{G,\mathcal{O}}=\det{(\mathcal{K}_{G,\mathcal{O}})},
\end{equation*}
where $\mathcal{K}_{G,\mathcal{O}}$ is a generalised adjacency matrix of $(G,\mathcal{O})$ defined as:
\begin{equation}
\label{SAM}
    \mathcal{K}_{G,\mathcal{O}}(v,v')=\begin{cases}
    \,x(v) & \text{if }v=v',\\
    \, a_{v,v'} & \text{if } v\rightarrow v'\, in\, \mathcal{O},\\
    -a_{v,v'} & \text{if } v'\rightarrow v\, in\, \mathcal{O},\\
    \quad 0 & \text{if } (v,v') \notin E(G).
    \end{cases}
\end{equation}
We will use $\mathcal{K}_G$ instead of $ \mathcal{K}_{G,\mathcal{O}}$ whenever the underlying orientation is clear.
\end{thm}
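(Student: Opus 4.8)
The plan is to expand the determinant over the symmetric group and match the surviving terms bijectively with loop-vertex configurations. Writing $n=|V(G)|$ and using the permutation expansion,
\[
\det(\mathcal{K}_{G,\mathcal{O}})=\sum_{\sigma\in S_n}\sgn(\sigma)\prod_{v\in V(G)}\mathcal{K}_{G,\mathcal{O}}(v,\sigma(v)),
\]
I first note that a term is nonzero only when, for every $v$, either $\sigma(v)=v$ (contributing the diagonal weight $x(v)$) or $(v,\sigma(v))\in E(G)$. Hence the nonzero terms are indexed by permutations whose functional digraph is supported on fixed points and edges of $G$; decomposing such a $\sigma$ into disjoint cycles, both $\sgn(\sigma)=\prod_{\text{cycles}}(-1)^{k-1}$ and the product of matrix entries factor over the cycles, so the whole contribution is a product of local weights, one per cycle.

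The key step is a sign-reversing involution that annihilates every permutation carrying a cycle of odd length at least $3$. The structural fact I would exploit is that the off-diagonal part of $\mathcal{K}_{G,\mathcal{O}}$ is skew-symmetric, $\mathcal{K}(v,v')=-\mathcal{K}(v',v)$ for $(v,v')\in E(G)$, which is immediate from \eqref{SAM} and the edge-sign convention \eqref{SE}. Reversing the direction of a single $k$-cycle preserves $\sgn(\sigma)$ (the cycle type is unchanged) but multiplies that cycle's local weight by $(-1)^k$, since each of its $k$ edge-factors changes sign. To turn this into a genuine involution I would reverse the odd cycle of length $\geq 3$ passing through the smallest-labelled vertex that lies on any such cycle; this is fixed-point-free on the set of surviving permutations possessing at least one odd long cycle, and it pairs each such permutation with one of equal sign but opposite total weight. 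All of these terms therefore cancel, leaving only permutations whose nontrivial cycles all have even length.

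It remains to identify the surviving permutations with elements of $\mathcal{L}(G)$ and to verify the weights coincide. A fixed point at $v$ corresponds to an isolated vertex and contributes $x(v)$; a transposition $(v\,v')$ is self-reverse and corresponds to a doubled edge, contributing $(-1)\,\mathcal{K}(v,v')\mathcal{K}(v',v)=a_{v,v'}^2$, which is exactly the value \eqref{weightofloop} assigns to a length-two loop; and an even cycle of length $2k\geq 4$ together with its reverse gives precisely the two directed orientations of an even loop $\ell$, each contributing $(-1)^{2k-1}\prod_i\mathcal{K}(v_i,v_{i+1})=-\prod_i\sgn(v_i,v_{i+1})\,a_{v_i,v_{i+1}}=w(\ell)$ as in \eqref{weightofloop}. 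Multiplying these local weights over the cycles of $\sigma$ reproduces $w(C)$ from \eqref{eqn:weightofconf}, so $\det(\mathcal{K}_{G,\mathcal{O}})=\sum_{C\in\mathcal{L}(G)}w(C)=\mathcal{Z}_{G,\mathcal{O}}$.

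The main obstacle I anticipate is not a computation but the bookkeeping of the cancelling involution: one must fix a canonical choice of which odd cycle to reverse (hence the minimal-vertex rule), confirm that reversal acts as an involution on whole permutations and is fixed-point-free there, and ensure that even transpositions are correctly excluded from the involution, since they are self-inverse under reversal and must survive as doubled edges. The complementary subtlety is keeping the two directed versions of each long even loop distinct—counting each once and never conflating them—so that no spurious factor of two or sign is introduced when passing between permutation cycles and directed loops.
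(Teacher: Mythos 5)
Your argument is correct: the permutation expansion of $\det(\mathcal{K}_{G,\mathcal{O}})$, the sign-reversing involution that reverses the long odd cycle through the minimal vertex (using skew-symmetry of the off-diagonal entries), and the weight checks for fixed points, transpositions, and even cycles of length $\geq 4$ all hold up, including the correct identification of the two directed versions of each long even loop with the two cyclic orders of the corresponding permutation cycle. The paper itself gives no proof of this statement (it is quoted from \cite{Ayyer2015ASM}), and your argument is essentially the standard one from that source.
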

\begin{example}
\label{PFLVM}
Let $G$ be a weighted graph on nine vertices with canonical orientation as shown in \cref{fig:OrGon9}. Let the vertex weights be $z$ for all the vertices and edge weights be $a_{i,j}\equiv a_{j,i}$ for the edge $(i,j)$. Then the weight of the configuration shown in \cref{fig:Example of conf} is $za_{1,2}^{2}a_{4,5}a_{5,6}a_{6,7}a_{7,8}a_{8,9}a_{4,9}$. The partition function of the loop-vertex model on the graph $G$ when all edge weights are equal to $a$, is 
$$
\mathcal{Z}_{G,\mathcal{O}} = z(8a^4 + 6a^2z^2 + z^4)^2
$$
which is also the determinant of the corresponding generalised adjacency matrix.
\end{example}
\begin{remark}
In the case of oriented Cartesian product of plane graphs each with a Pfaffian orientation, the loop-vertex model is known as the \emph{monopole-dimer model} and the 
weight of a loop $\ell=(v_0,v_1,\dots,v_{2k-1},\allowbreak 
v_{2k}=v_0)$ can be written \emph{independent of the orientation}~\cite[Theorem~3.8]{Arora_Ayyer2023}. In particular, for a plane graph, it can be expressed~\cite{Ayyer2015ASM} as
    \begin{equation}
    \label{MDMEQ}
        w(\ell)=(-1)^{\text{number of vertices enclosed by $\ell$}} \,\prod_{j=0}^{2k-1}  a_{v_j,v_{j+1}}.
    \end{equation}
\end{remark}   
Consequently, \cref{thm:pf-lvmodel} shows that the partition function of the monopole-dimer model on a Cartesian product of plane graphs is given by a determinant which turns out to be independent of the Pfaffian orientations on those plane graphs. The author and Ayyer~\cite[Theorem~6.1]{Arora_Ayyer2023} have extended the product formula in \eqref{eqn:dimer-pf} for the monopole-dimer model on the higher dimensional grids, we present a particular case of their result.

First, let us define the \emph{boustrophedon labelling}.
Recall that $P_n$ denotes the path graph on $n$  vertices and $Q_{n_1,\dots,n_d}$ is the $d$-dimensional grid graph with side lengths $n_1,\dots,n_d$ which can be regarded as the Cartesian product of $P_{n_1}, \dots, P_{n_d}$ denoted as $P_{n_1}\square\cdots\square P_{n_d}$. We will associate the boustrophedon labelling $L_d$ (defined inductively) on $Q_{n_1,\dots,n_d}$ as follows:

For $d=1$, label $L_1$ is $1,2,\dots,n_1$. For $d>1$, $Q_{n_1,\dots,n_d}$ consists of $n_d$ copies of $(d-1)$-dimensional grid graph $Q_{n_1,\dots,n_{d-1}}$. Successive copies (with successive last coordinate $1,2,\dots,n_d$) are labelled consecutively as $L_{d-1},L_{d-1}',L_{d-1},L_{d-1}',\dots$ where $L_{d-1}'$ represents the labelling of $(d-1)$-dimensional grid graph in reverse order of $L_{d-1}$. For example,
 the vertex $(p,q,r)$ in $Q_{2n_1,2n_2,2n_3}$ has label
\begin{equation}
\label{eqn:3DBousLabel}
\begin{cases}
8tn_1 n_2+4sn_1+p &  q=2s+1, r=2t+1,\\
      8tn_1 n_2+4sn_1-p+1 &  q=2s, r=2t+1,\\
      8tn_1 n_2-4sn_1-p+1 &  q=2s+1, r=2t,\\
      8tn_1 n_2-4sn_1+p &  q=2s, r=2t,
\end{cases}
\end{equation}
where $p \in [2n_1],q \in [2n_2]$ and $r \in [2n_3]$. 
\cref{fig:P4_P2_P2} shows this labelling on the graph $Q_{4,2,2}$. 
\begin{figure}[h!]
    \centering
    \includegraphics[scale=0.80]{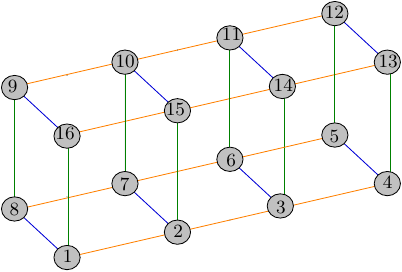}
    \caption{The boustrophedon labelling on $P_4\square P_2 \square P_2$.}
\label{fig:P4_P2_P2}
\end{figure}
Any snake-like labelling like the one above is a called a \emph{boustrophedon labelling}.
\begin{thm}\cite[Theorem~6.1]{Arora_Ayyer2023}
\label{thm:PFkDgrid}
Let $G$ be the $d$-dimensional grid graph $Q_{2m_1,\dots,2m_d}$ with boustrophedon labelling. {Let $(G,\mathcal{O})$ be obtained from $G$ 
by orienting the edges from a lower-labelled vertex to a higher-labelled vertex.} Let the vertex weights be $x$ for all vertices of $G$, and edge weights be $a_1,\dots,a_d$ for the edges 
along the different coordinate axes. 
Then the partition function of the monopole-dimer model on $G$ is given by
\begin{align}
\label{eqn:PFofkGrid}
    \mathcal{Z}_{G}\equiv\mathcal{Z}_{m_1,\dots,m_d}=
    \prod_{i_{1}=1}^{ m_1}\cdots \prod_{i_{d}=1}^{m_d} 
\left( x^2+\sum_{q=1}^{d} 4a_s^2\cos^2{\frac{i_{p_q}\pi }{2m_{p_q}+1}} \right)^{2^{d-1}}.
\end{align}
\end{thm}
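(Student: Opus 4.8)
The plan is to invoke \Cref{thm:pf-lvmodel} to replace the partition function by the determinant $\det(\mathcal{K}_G)$, and then to diagonalise $\mathcal{K}_G$ explicitly. Since for a Cartesian product of plane graphs the monopole-dimer partition function is independent of the Pfaffian orientations used (the remark containing \eqref{MDMEQ}), I am free to compute $\det(\mathcal{K}_G)$ using the oriented Cartesian product orientation of \Cref{def:ortd Cart} rather than the canonical one; both yield the same value. Writing $\mathcal{K}_G = x\,\mathrm{Id} + N$, where $N$ is the signed adjacency matrix of $(G,\mathcal{O})$, I would decompose $N = N_1 + \dots + N_d$, where $N_s$ collects exactly the edges running along the $s$-th coordinate axis.

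The key structural step is to read off the tensor shape of each $N_s$ from \Cref{def:ortd Cart}. Ordering the tensor factors from the last coordinate to the first, I expect $N_s$ to act as the skew-symmetric tridiagonal path matrix $a_s T_{2m_s}$ on the $s$-th factor, as a diagonal sign matrix $\Sigma_t = \mathrm{diag}((-1)^{u_t})$ on each factor $t > s$ (this is precisely the parity twist $u_{s+1}+\dots+u_d$ in the orientation rule), and as the identity on each factor $t < s$, up to a global sign. From this description the crucial algebraic fact follows: for $s \neq t$ the matrices $N_s$ and $N_t$ \emph{anticommute}, $N_s N_t = -N_t N_s$, because they disagree on exactly one tensor factor, where a diagonal $\pm1$ matrix meets a tridiagonal matrix and the two anticommute. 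I would then record the two consequences that drive the whole computation: $N^2 = \sum_{s=1}^d N_s^2$ (all cross terms cancel in pairs) and the summands $N_s^2$ pairwise commute. Getting this tensor/sign bookkeeping exactly right from \Cref{def:ortd Cart} is the main obstacle; everything after it is routine linear algebra.

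Because the $N_s^2$ commute, they are simultaneously diagonalisable, and each $N_s^2 = a_s^2\,(\mathrm{Id}\otimes\cdots\otimes T_{2m_s}^2\otimes\cdots\otimes\mathrm{Id})$ inherits the eigenvalues of $T_{2m_s}^2$. The matrix $T_{2m_s}$ is the standard skew-symmetric tridiagonal matrix, whose eigenvalues are $2i\cos\frac{k\pi}{2m_s+1}$ for $k=1,\dots,2m_s$; hence $N_s^2$ has eigenvalues $-4a_s^2\cos^2\frac{k\pi}{2m_s+1}$. Consequently $N^2$ has eigenvalues $-\sum_{s=1}^d 4a_s^2\cos^2\frac{k_s\pi}{2m_s+1}$ as the $k_s$ range independently, so the eigenvalues of $N$ are $\pm i\sqrt{\sum_s 4a_s^2\cos^2\frac{k_s\pi}{2m_s+1}}$ and those of $\mathcal{K}_G = x\,\mathrm{Id}+N$ are $x \pm i\sqrt{\sum_s 4a_s^2\cos^2\frac{k_s\pi}{2m_s+1}}$.

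Finally I would assemble the determinant as the product of these eigenvalues. Fixing $(i_1,\dots,i_d)$ with $i_s\in\{1,\dots,m_s\}$ and using $\cos^2\frac{k\pi}{2m+1}=\cos^2\frac{(2m+1-k)\pi}{2m+1}$, the two choices $k_s\in\{i_s,\,2m_s+1-i_s\}$ per axis produce a joint eigenspace of dimension $2^d$ on which $N^2 = -c\,\mathrm{Id}$ with $c=\sum_{s=1}^d 4a_s^2\cos^2\frac{i_s\pi}{2m_s+1}$. Since $N$ is real, its eigenvalues $\pm i\sqrt{c}$ occur there with equal multiplicity $2^{d-1}$, so this block contributes the factor $(x^2+c)^{2^{d-1}}$. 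Taking the product over all $(i_1,\dots,i_d)$ then yields exactly \eqref{eqn:PFofkGrid}, and the dimension count $\prod_s 2m_s = 2^d\prod_s m_s$ confirms that every eigenvalue is accounted for. As a sanity check, for $d=2$ and $m_1=m_2=1$ the resulting $4\times4$ matrix has determinant $(x^2+a_1^2+a_2^2)^2$, in agreement with the formula.
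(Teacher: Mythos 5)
Your proposal is correct, but it proves the formula by a genuinely different mechanism than the one used in the paper (and in the cited source, whose technique the paper reuses verbatim for the cylindrical, M\"obius and Klein analogues). The paper's route is an iterated explicit diagonalisation: it writes $\mathcal{K}_G$ as a sum of Kronecker products in which the factors \emph{below} the active coordinate are the reversal matrices $J_{2m_j}$ (this is what the boustrophedon labelling produces), applies the unitaries $u_{2m_d}\otimes\cdots\otimes u_{2m_2}\otimes I_{2m_1}$ of \Cref{lem:UTuk}, peels off one block-diagonal layer per coordinate, and finishes by computing determinants of $2m_1\times 2m_1$ cruciform matrices. You instead work in the lexicographic tensor ordering, where \Cref{def:ortd Cart} puts diagonal sign matrices $\Sigma_t$ \emph{above} the active coordinate and identities below; the two presentations differ by a simultaneous row/column permutation, so the determinant is unaffected (worth saying explicitly, since the theorem is stated for the boustrophedon labelling). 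Your key observation --- that the $N_s$ pairwise anticommute, so $N^2=\sum_s N_s^2$ is a sum of commuting tensor-product squares --- replaces all of the paper's unitary bookkeeping with a Clifford-algebra-style eigenvalue count, and the final multiplicity argument (each joint eigenspace of the $N_s^2$ is a real $N$-invariant $2^d$-dimensional block on which $N^2=-c\,\mathrm{Id}$ with $c>0$, forcing multiplicity $2^{d-1}$ for each of $\pm\iota\sqrt{c}$) is sound; note only that $c>0$ uses $\cos\frac{i_s\pi}{2m_s+1}\neq 0$, which holds because $2m_s+1$ is odd. What your approach buys is brevity and conceptual clarity for the free-boundary case; what the paper's approach buys is robustness, since the extra antidiagonal boundary terms $B^{\text{Cyl}}$ and $B^{\text{M\"ob}}$ in the later sections destroy the clean anticommutation and genuinely require the layer-by-layer diagonalisation.
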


In the following sections, we will extend the product formulas in \Cref{thm:DM_Cyl_Tor,thm:DM_mob_klein} for the monopole-dimer model on higher dimensional grids with different boundary conditions. 
This attempt parallels the approach 
in \Cref{thm:PFkDgrid} for higher dimensional grids with free boundary condition.

Let $T_k(-s,z,s)$ denote the $k\times k$ tridiagonal Toeplitz matrix with diagonal entries $z$, subdiagonal entries $-s$, and superdiagonal entries $s$, and let $J_k$ denote the $k\times k$ antidiagonal matrix with all antidiagonal entries equal to $1$. 
we will use the following notation for an $n\times n$ diagonal and antidiagonal matrix
\begin{align*}
    \mathrm{diag}\big(x_1,\dots ,x_n\big)&=\begin{pmatrix}
   x_1& &\\
     &\ddots &\\
    & & x_n
  \end{pmatrix},\\
    \mathrm{adiag}\big(x_1,\dots ,x_n\big)&=\begin{pmatrix}
   & &x_1\\
     &\iddots &\\
    x_n&& 
  \end{pmatrix}.
\end{align*}
Recall the definition of the \emph{Kronecker product} of two matrices.
\begin{definition}
\label{def:KronProdMat}
Let $A=(a_{i,j})$ be an $m\times n$ matrix and $B=(b_{i,j})$ be a $p\times q$ matrix, then the \emph{Kronecker product}, $A \otimes B$, is an $mp\times nq$ block matrix defined as
\[
\begin{pmatrix}
    a_{11}B &\cdots&a_{1n}B \\ 
    \vdots &\ddots&\vdots \\ 
    a_{m1}B &\cdots&a_{mn}B \\ 
\end{pmatrix}.
\]
\end{definition}
The Kronecker product of two matrices is non-commutative in general. If $A$ and $B$ are square matrices of order $n$ and $p$, respectively, then $$\det A\otimes B=(\det A )^p(\det B)^n.$$

Now, we recall some unitary similarity transforms that will be useful for computing determinants in the upcoming sections, along with their action on different matrices. We will use $\sim$ to denote the equivalence relation of similarity on matrices and $\iota$ for $\sqrt{-1}$.
\begin{lem}\cite[Section~4]{Fisher}
\label{lem:UTuk}
Let $u_k$ be the standard unitary similarity transformation whose entries are given by
\begin{equation}
    \label{eqn:UTuk}
    (u_k)_{p,q}=\sqrt{\frac{2}{k+1}}\iota^{p}\sin \left(
    \frac{pq\pi}{k+1} \right).
\end{equation}
Then, $u_k$
transforms the Toeplitz matrix, $T_k(-s,z,s)$, into the diagonal matrix 
$$
D_k =\mathrm{diag}\bigg(z+2\iota s\cos{\frac{\pi}{k+1}},
\dots,z+2\iota s\cos{\frac{k\pi}{k+1}}\bigg),
$$ 
and
\begin{equation}
\label{Eqn:UTforToeplitz}
(u_k)^{-1}J_k u_k=
\iota^{k-1}\begin{pmatrix}
   & & &(-1)^{k-1}\\
     & & (-1)^{k-2}&\\
     &\iddots & &\\
    (-1)^0& & & 
  \end{pmatrix}.
\end{equation}
\end{lem}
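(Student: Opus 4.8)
The plan is to treat the columns of $u_k$ as a simultaneous orthonormal eigenbasis, so that every assertion in the lemma reduces to the orthogonality relation for the discrete sine transform,
\[
\sum_{p=1}^{k}\sin\frac{pq\pi}{k+1}\,\sin\frac{pq'\pi}{k+1}=\frac{k+1}{2}\,\delta_{q,q'},
\qquad 1\le q,q'\le k.
\]
First I would record that $u_k$ is unitary: since $\overline{\iota^{p}}\,\iota^{p}=1$, the $(q,q')$ entry of $u_k^{*}u_k$ equals $\tfrac{2}{k+1}\sum_{p=1}^{k}\sin\frac{pq\pi}{k+1}\sin\frac{pq'\pi}{k+1}=\delta_{q,q'}$, so that $u_k^{-1}=u_k^{*}$ and conjugation by $u_k$ is a genuine similarity.

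For the diagonalization I would verify directly that the $q$-th column $v^{(q)}$, with entries $v^{(q)}_p=\iota^{p}\sin\frac{pq\pi}{k+1}$, is an eigenvector of $T_k(-s,z,s)$. Expanding $(T_k v^{(q)})_p=-s\,v^{(q)}_{p-1}+z\,v^{(q)}_{p}+s\,v^{(q)}_{p+1}$ --- the boundary terms $v^{(q)}_0=v^{(q)}_{k+1}=0$ vanishing because $\sin 0=\sin q\pi=0$ --- I would factor out $\iota^{p}$ and use $\iota^{-1}=-\iota$ together with the sum-to-product identity $\sin\frac{(p-1)q\pi}{k+1}+\sin\frac{(p+1)q\pi}{k+1}=2\cos\frac{q\pi}{k+1}\sin\frac{pq\pi}{k+1}$. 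The bracket then collapses to $z+2\iota s\cos\frac{q\pi}{k+1}$, giving $T_k v^{(q)}=\big(z+2\iota s\cos\frac{q\pi}{k+1}\big)v^{(q)}$, which is precisely the $q$-th diagonal entry of $D_k$.

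The action on $J_k$ I would compute entrywise. Since $(J_k)_{p,p'}=\delta_{p',\,k+1-p}$, the $(q,q')$ entry of $u_k^{*}J_k u_k$ is $\sum_{p}\overline{(u_k)_{p,q}}\,(u_k)_{k+1-p,\,q'}$. Using the reflection identity $\sin\frac{(k+1-p)q'\pi}{k+1}=-(-1)^{q'}\sin\frac{pq'\pi}{k+1}$ to remove the reflected row index, and collecting the powers of $\iota$ via $\overline{\iota^{p}}\,\iota^{k+1-p}=\iota^{k+1-2p}=(-1)^{p}\iota^{k+1}$, this entry becomes $-\tfrac{2(-1)^{q'}\iota^{k+1}}{k+1}\sum_{p}(-1)^{p}\sin\frac{pq\pi}{k+1}\sin\frac{pq'\pi}{k+1}$. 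A second reflection, $(-1)^{p}\sin\frac{pq'\pi}{k+1}=-\sin\frac{p(k+1-q')\pi}{k+1}$, turns the remaining sum into $-\tfrac{k+1}{2}\delta_{q,\,k+1-q'}$ by orthogonality, leaving $(u_k^{*}J_k u_k)_{q,q'}=(-1)^{q'}\iota^{k+1}\delta_{q,\,k+1-q'}$. On the antidiagonal $q'=k+1-q$ this equals $(-1)^{k+1-q}\iota^{k+1}=(-1)^{k-q}\iota^{k-1}$, using $\iota^{k+1}=-\iota^{k-1}$, which is exactly the stated antidiagonal matrix with entries $(-1)^{k-q}$ scaled by $\iota^{k-1}$.

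I expect the only real obstacle to be the bookkeeping in the last step --- keeping the two reflection identities, the parity factors $(-1)^{p}$ and $(-1)^{q'}$, and the powers of $\iota$ straight so that the final $\iota^{k+1}$ reduces correctly to the prefactor $\iota^{k-1}$ and the antidiagonal signs $(-1)^{k-q}$. Everything else follows from the single orthogonality relation, so once that identity is in hand the lemma is a matter of careful substitution.
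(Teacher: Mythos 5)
Your verification is correct: the unitarity check, the eigenvector computation for $T_k(-s,z,s)$ (with the sign flip $\iota^{-1}=-\iota$ turning $-s$ into $+s\iota$ so the two off-diagonal terms combine via the sum-to-product identity), and the double-reflection bookkeeping for $u_k^{-1}J_ku_k$ all land on the stated entries $(-1)^{k-q}\iota^{k-1}$ on the antidiagonal. The paper itself gives no proof of this lemma --- it is cited from Fisher --- and your direct computation is exactly the standard derivation of that cited result, so there is nothing to add.
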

\begin{lem}\cite[Section~6]{McCoy&Wu_1973}
\label{lem:UTVn}
Let $V_n$ be the unitary similarity transform defined as 
\begin{equation}
\label{eqn:UTVn}
V_n=
\frac{1}{\sqrt{n}} \begin{pmatrix}
        1&\cdots&1\\
        e^{\iota\theta_1}&\cdots&e^{\iota\theta_{n}}\\
\vdots&&\vdots\\
        e^{(n-1)\iota\theta_1}&\cdots&e^{(n-1)\iota\theta_{n}}
    \end{pmatrix}
    \text{ for  }
    \theta_j=\frac{2j-1}{n}\pi.
\end{equation}
Then,
\begin{align*}
V_{n}^{-1}\left(T_{n}(-a,x,a)+a\mathrm{adiag}\left(1,0,\dots,0,-1\right)\right)V_{n}&=
\mathrm{diag}\left(x+2\iota a\sin{\frac{\pi}{n}},\dots, x+2\iota a\sin{\frac{(2n-1)\pi}{n}}\right),\\
V_{n}^{-1}J_{n}V_{n}&=
-\mathrm{adiag}\left(e^{\iota\frac{\pi}{n}},\dots,
e^{\iota\frac{(2n-1)\pi}{n}}
\right).
\end{align*}
\end{lem}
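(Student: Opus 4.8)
The plan is to exploit two structural facts: that $V_n$ is, up to scaling, a Vandermonde/Fourier matrix whose nodes are the $n$-th roots of $-1$, and that the matrix being transformed is a skew-circulant perturbation of a tridiagonal Toeplitz matrix. Write $\omega_j:=e^{\iota\theta_j}$ with $\theta_j=\frac{(2j-1)\pi}{n}$, so that the $j$-th column of $V_n$ is $v_j=\frac{1}{\sqrt n}(1,\omega_j,\dots,\omega_j^{\,n-1})^{\mathsf T}$. The key identity is $\omega_j^{\,n}=e^{\iota(2j-1)\pi}=-1$ for every $j$. First I would record that $V_n$ is unitary: the Hermitian inner product of columns $v_j$ and $v_k$ equals $\frac1n\sum_{p=0}^{n-1}(\omega_j\overline{\omega_k})^{p}$, and since $\overline{\omega_k}=\omega_k^{-1}$ and $(\omega_j\omega_k^{-1})^{n}=(-1)/(-1)=1$, the geometric sum vanishes for $j\neq k$ and equals $n$ for $j=k$. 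Hence $V_n^{-1}=V_n^{\ast}$, which is what makes the two displayed conjugations tractable.

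Next I would establish the first identity by showing that each $v_j$ is an eigenvector of $M:=T_n(-a,x,a)+a\,\mathrm{adiag}(1,0,\dots,0,-1)$. Observe that $M$ is the tridiagonal Toeplitz matrix with the two corner entries $M_{1,n}=a$ and $M_{n,1}=-a$ adjoined, i.e.\ a skew-circulant (anti-periodic) band matrix. For an interior row $1<p<n$, direct substitution gives $(Mv_j)_p=(v_j)_p\big(x+a(\omega_j-\omega_j^{-1})\big)=(v_j)_p\big(x+2\iota a\sin\theta_j\big)$. The point of interest is the two boundary rows: evaluating $(Mv_j)_1$ and $(Mv_j)_n$ and substituting $\omega_j^{\,n-1}=-\omega_j^{-1}$ and $\omega_j^{\,n-2}=-\omega_j^{-2}$ (both consequences of $\omega_j^{\,n}=-1$) shows that the corner terms supply exactly the missing neighbour, so the same eigenvalue $x+2\iota a\sin\frac{(2j-1)\pi}{n}$ emerges. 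Since the $v_j$ are orthonormal, $V_n^{-1}MV_n$ is the asserted diagonal matrix.

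For the second identity, let $J_n$ be the flip matrix with $(J_n)_{p,q}=\delta_{p+q,\,n+1}$. Using $(V_n^{\ast})_{j,p}=\frac{1}{\sqrt n}\omega_j^{-(p-1)}$ and summing over the single nonzero antidiagonal of $J_n$, I would obtain
\[
(V_n^{\ast}J_nV_n)_{j,k}=\frac1n\,\omega_k^{\,n-1}\sum_{p=0}^{n-1}(\omega_j\omega_k)^{-p}.
\]
Because $(\omega_j\omega_k)^{n}=1$, this geometric sum is $0$ unless $\omega_j\omega_k=1$, i.e.\ unless $k=n+1-j$, so the result is antidiagonal. On the surviving antidiagonal, substituting $\omega_k^{\,n-1}=-\omega_k^{-1}$ and simplifying $\omega_{n+1-m}^{-1}$ by means of $\omega^{\,n}=-1$ turns the $(m,n+1-m)$ entry into $-e^{\iota(2m-1)\pi/n}$, which is precisely $-\mathrm{adiag}\big(e^{\iota\pi/n},\dots,e^{\iota(2n-1)\pi/n}\big)$.

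The routine part of all of this is the geometric-sum bookkeeping; the delicate part is the two boundary rows in the first identity and the index reversal in the second. One must check that the \emph{signs} of the corner entries $+a$ and $-a$ are exactly those forcing the anti-periodic closure $\omega_j^{\,n}=-1$ (the choice $+a,+a$ would instead impose periodic boundary conditions and ordinary roots of unity, changing $\sin\theta_j$ and hence the spectrum), and one must track the reversal $k=n+1-j$ carefully so that the surviving entries land on the antidiagonal in the correct order. These are exactly the spots where a sign error or an off-by-one slip would corrupt the final formula, so I expect them to be the main obstacle to a clean write-up.
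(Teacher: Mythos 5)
Your proposal is correct. The paper does not prove this lemma at all---it is quoted from McCoy--Wu \cite{McCoy&Wu_1973} as a known result---so there is no in-paper argument to compare against; your verification (checking unitarity of $V_n$ via the geometric sum with $(\omega_j\omega_k^{-1})^n=1$, showing each column $v_j$ is an eigenvector of the anti-periodic matrix $T_n(-a,x,a)+a\,\mathrm{adiag}(1,0,\dots,0,-1)$ with eigenvalue $x+2\iota a\sin\theta_j$ using $\omega_j^{\,n}=-1$ at the two boundary rows, and computing $V_n^{*}J_nV_n$ entrywise to land on the antidiagonal at $k=n+1-j$ with entry $-e^{\iota(2j-1)\pi/n}$) is sound and matches the stated formulas, including the sign conventions for the corner entries and the paper's $\mathrm{adiag}$ indexing.
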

\section{High-dimensional 
 cylindrical and toroidal grid graphs}
\label{sec:HDOrienGrid}
First, recall that $P_n$ denotes the path graph on $n$  vertices and $Q_{n_1,n_2,n_3}$ is the three-dimensional grid graph. Throughout the text, we will assign $Q_{2n_1,2n_2,2n_3}$ the boustrophedon labelling 
defined in \eqref{eqn:3DBousLabel}.
\cref{fig:P4_P2_P2} shows this labelling on the graph $Q_{4,2,2}$. 
We will denote the following $n\times n$ antidiagonal matrices as
\begin{align*}
B_n^{Cyl}&=
\mathrm{adiag}\left(1,0,\dots,0,-1\right),
    \\
B_n^{\text{M\"{o}b}}&=\mathrm{adiag}\left(1,0,\dots,0,1\right).
\end{align*}

Let us now delve into the discussion regarding the partition function of the monopole-dimer model on higher dimensional grids with cylindrical and toroidal boundary conditions.
\begin{definition}
\label{defn:HDCylGrid}
We define an \emph{$\ell$-cylindrical grid} denoted $Q^{\ell}_{n_1,\dots,n_d}$ as the graph $C_{n_1}\square \cdots  \square  C_{n_{\ell}}\allowbreak\square P_{n_{\ell+1}}\square\cdots\square P_{n_d}$. 
For $\ell=1$ ($\ell=d$), we
 call it a \emph{cylindrical (toroidal) grid} and use the notation $Q^{\text{Cyl}}_{n_1,\dots,n_d}$ ($Q^{\text{Tor}}_{n_1,\dots,n_d}$).
\end{definition}
We sometimes refer to $Q^{\ell}_{n_1,\dots,n_d}$ as the $d$-dimensional grid $Q_{n_1,\dots,n_d}$ with cylindrical, toroidal and mixed boundary conditions depending on whether $\ell$ is $1,d$ or in between, respectively.
{Note that $Q^{\ell}_{n_1,\dots,n_d}$ with canonical orientation induced from boustrophedon labelling can be regarded as the oriented cartesian product of $C_{n_1}, \dots  ,\allowbreak C_{n_{\ell}},P_{n_{\ell+1}},\dots, P_{n_d}$. Thus, the loop-vertex model on an $\ell$-cylindrical grid is nothing but the monopole-dimer model.}
\begin{thm}
\label{thm:PFdgrid_cyl}
Let $G$ be the $d$-dimensional cylindrical grid $Q^{\text{Cyl}}_{2m_1,\dots,2m_d}$ with boustrophedon labelling. Let $(G,\mathcal{O})$ be obtained from $G$ 
by orienting the edges from lower-labelled vertex to higher-labelled vertex. Let the vertex weights be $x$ for all vertices of $G$, and edge weights be $a_1,\dots,a_d$ for the edges 
along the different coordinate axes. Then the partition function of the {monopole-dimer} model on $G$ is given by
        \begin{equation*}
     \mathcal{Z}_{2m_1,\dots,2m_d}^{\text{Cyl}}\equiv\mathcal{Z}_{G}^{\text{Cyl}}=\prod_{i_{1}=1}^{m_1}\cdots \prod_{i_{d}=1}^{m_d}\bigg(x^2+4a_1^2\sin^2{\frac{ (2i_1-1)\pi}{2m_1}}+\sum_{q=2}^{d} 4a_q^2\cos^2{\frac{i_{q}\pi }{2m_{q}+1}}\bigg )^{2^{d-1}}. 
   \end{equation*}
\end{thm}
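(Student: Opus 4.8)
\section*{Proof proposal}

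The plan is to follow the determinantal route of \Cref{thm:PFkDgrid}, altering only the treatment of the first coordinate. Since $Q^{\text{Cyl}}_{2m_1,\dots,2m_d}$ with the orientation induced by the boustrophedon labelling is, by construction, the oriented Cartesian product of $C_{2m_1},P_{2m_2},\dots,P_{2m_d}$ in the sense of \Cref{def:ortd Cart}, \Cref{thm:pf-lvmodel} gives $\mathcal{Z}_G^{\text{Cyl}}=\det(\mathcal{K}_G)$, so it suffices to evaluate this determinant. First I would record the Kronecker-sum form $\mathcal{K}_G=xI+\sum_{q=1}^d a_q\,\Gamma_q$, where each $\Gamma_q$ is a Kronecker product carrying a tridiagonal Toeplitz factor in the $q$-th slot and identity or antidiagonal factors $J$ in the other slots, the reversal factors $J$ being exactly the trace of the sign rule of \Cref{def:ortd Cart} together with the $L_{d-1},L_{d-1}'$ alternation of the labelling. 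The only departure from the free-boundary matrix of \Cref{thm:PFkDgrid} is that the cycle $C_{2m_1}$ contributes in the first slot the operator $T_{2m_1}(-a_1,0,a_1)+a_1 B_{2m_1}^{Cyl}$ rather than the path operator $T_{2m_1}(-a_1,0,a_1)$; the extra corner entries of $B_{2m_1}^{Cyl}$ encode the wrap-around edge of the cycle.

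Next I would conjugate by the unitary $W=V_{2m_1}\otimes u_{2m_2}\otimes\cdots\otimes u_{2m_d}$, which preserves the determinant and fixes the scalar term $xI$. By \Cref{lem:UTVn} the first factor diagonalizes the cyclic operator $T_{2m_1}(-a_1,0,a_1)+a_1 B_{2m_1}^{Cyl}$, producing the entries $2\iota a_1\sin\frac{(2j_1-1)\pi}{2m_1}$, and sends $J_{2m_1}$ to an antidiagonal matrix; by \Cref{lem:UTuk} each remaining factor diagonalizes $T_{2m_q}(-a_q,0,a_q)$, producing the entries $2\iota a_q\cos\frac{j_q\pi}{2m_q+1}$, and sends $J_{2m_q}$ to an antidiagonal matrix. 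The antidiagonal images couple the index $j_q$ with $2m_q+1-j_q$ in each coordinate, so $W^{-1}\mathcal{K}_G W$ acquires exactly the block-diagonal form encountered in the proof of \Cref{thm:PFkDgrid}, with blocks indexed by the reflection orbits $\{j_q,\,2m_q+1-j_q\}$.

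The heart of the computation is the evaluation of one such block. Here the transformed $J$-factors make $\Gamma_1,\dots,\Gamma_d$ into an anticommuting (Clifford-type) family, so that $\big(\sum_q a_q\Gamma_q\big)^2$ acts on the orbit block as the scalar $-\big(4a_1^2\sin^2\frac{(2j_1-1)\pi}{2m_1}+\sum_{q\ge 2}4a_q^2\cos^2\frac{j_q\pi}{2m_q+1}\big)$ times the identity. Consequently $W^{-1}\mathcal{K}_G W$ has, on this block, the eigenvalues $x\pm\iota\sqrt{4a_1^2\sin^2\frac{(2j_1-1)\pi}{2m_1}+\sum_{q\ge 2}4a_q^2\cos^2\frac{j_q\pi}{2m_q+1}}$, each with multiplicity $2^{d-1}$, so the block determinant is $\big(x^2+4a_1^2\sin^2\frac{(2j_1-1)\pi}{2m_1}+\sum_{q\ge 2}4a_q^2\cos^2\frac{j_q\pi}{2m_q+1}\big)^{2^{d-1}}$. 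This is identical to the free-boundary computation of \Cref{thm:PFkDgrid} save that the first-coordinate contribution is now $\sin^2\frac{(2j_1-1)\pi}{2m_1}$ in place of $\cos^2\frac{j_1\pi}{2m_1+1}$. Taking the product over the distinct orbits, indexed by $i_1\in[m_1],\dots,i_d\in[m_d]$, then yields the claimed formula.

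I expect the main obstacle to be the second and third steps: verifying that, after conjugation by $W$, the antidiagonal images of the $J$-factors (whose precise signs and phases are supplied by \Cref{lem:UTuk} and \Cref{lem:UTVn}) assemble into a genuinely anticommuting family, so that each orbit block collapses to the clean sum-of-squares determinant with the correct multiplicity $2^{d-1}$. In particular one must check that the cycle correction $a_1 B_{2m_1}^{Cyl}$ passes through this machinery without disturbing the remaining coordinates; this is exactly the role that \Cref{lem:UTVn} is engineered to play, replacing the $\cos^2$ of the first coordinate in \Cref{thm:PFkDgrid} by $\sin^2$ while leaving the other coordinates untouched.
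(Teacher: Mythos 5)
Your proposal is correct in substance and shares the paper's overall framework (the Kronecker-sum form of $\mathcal{K}_G$, the unitaries of \Cref{lem:UTuk} and \Cref{lem:UTVn}, and block-diagonalization over reflection orbits), but the way you evaluate the blocks is genuinely different from what the paper does. The paper applies only $u_{2m_d}\otimes\cdots\otimes u_{2m_2}\otimes I_{2m_1}$ at the outset, then peels off one coordinate at a time, at each stage diagonalizing a cruciform matrix $D_{j}+cJ_{j}'$ to fold that coordinate's contribution into a running parameter $\lambda_{i_d,\dots,i_p}$, and only at the very last step conjugates the residual $2m_1\times 2m_1$ matrix $T_{2m_1}(-a_1,x,a_1)+a_1B_{2m_1}^{Cyl}\pm\iota\lambda J_{2m_1}$ by $V_{2m_1}$. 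You instead conjugate by the full tensor product of unitaries at once and collapse each $2^d$-dimensional orbit block in one stroke via the Clifford-type identity $\bigl(\sum_q a_q\Gamma_q\bigr)^2=-\lambda^2 I$. Your anticommutation claim does hold: since conjugation by a fixed unitary preserves all products, it reduces to the slot-by-slot identities $J_kT_k(-s,0,s)J_k=-T_k(-s,0,s)$ and $J_kB_k^{Cyl}J_k=-B_k^{Cyl}$, both immediate, with exactly one anticommuting slot in each product $\Gamma_q\Gamma_{q'}$. The one step you should make explicit is why $x+\iota\lambda$ and $x-\iota\lambda$ each occur with multiplicity exactly $2^{d-1}$ on a block: this follows because the restriction of $\sum_q a_q\Gamma_q$ to an orbit block is traceless (every Kronecker factor in slot $q$ is either antidiagonal or of the form $\mathrm{diag}(c,-c)$), so the two eigenvalues $\pm\iota\lambda$ balance. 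What your route buys is a shorter, more conceptual finish that makes the $2^{d-1}$ exponent transparent and treats all coordinates symmetrically; what the paper's iterative route buys is a template that transfers with minimal change to the M\"obius and Klein cases of \Cref{thm:PF3grid_Mob,thm:PF3grid_Klein}, where the extra boundary term breaks the clean anticommutation structure and the block analysis must be done by hand via \Cref{lem:det2by2Block}.
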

\begin{figure}[h!]
    \centering    \includegraphics[scale=0.80]{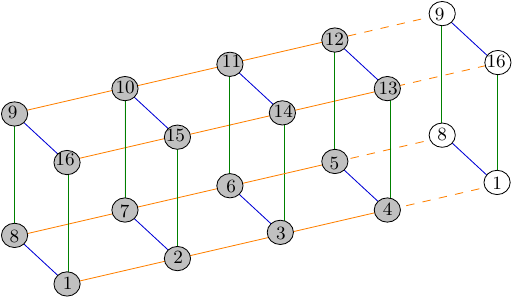}
    \caption{The boustrophedon labelling on the cylindrical grid $Q^{\text{Cyl}}_{4,2,2}$.}
    \label{fig:CylGrid}
\end{figure}
\Cref{fig:CylGrid} shows a three-dimensional grid graph with boustrophedon labelling and cylindrical boundary conditions. Using~\cite[Corollary 3.9]{Arora_Ayyer2023}, $\mathcal{Z}_{G}^{\text{Cyl}}$ remains independent of the various Pfaffian orientations on $C_{2n_1}, P_{2n_2},\dots,$ $P_{2n_d}$. Before presenting the proof, we exemplify the proof strategy below.
\begin{example}
Consider $G$ to be the three-dimensional cylindrical grid $Q^{\text{Cyl}}_{4,2,2}$ with the boustrophedon labelling as shown in \Cref{fig:CylGrid}. Orient $G$ in the canonical way.
Then the generalised adjacency matrix is
\begin{align*}
\mathcal{K}_G&=I_{2} \otimes I_{2}\otimes T_{4}(-a_1,x,a_1)
+I_{2}\otimes T_{2}(-a_2,0,a_2)\otimes J_{4} 
+T_{2}(-a_3,0,a_3)\otimes J_{2}\otimes J_{4}
\\
&
+a_1 I_{2}\otimes
    I_{2}\otimes B_4^{Cyl}.
\end{align*}
Let $u_2$ be the unitary matrix,
\[u_2=\sqrt{\frac{2}{3}}\begin{pmatrix}
    \iota\sin \frac{\pi}{3}
    &\iota\sin \frac{2\pi}{3}\\
    -\sin \frac{2\pi}{3}
    &-\sin \frac{4\pi}{3}
\end{pmatrix}.\]
Then using the unitary transform
$u_{2}\otimes u_{2} \otimes I_{4}$,  
we see that 
\begin{multline*}
  \mathcal{K}_G \sim 
  I_{2} \otimes I_{2} \otimes 
  \begin{pmatrix}
        x&a_1&0&a_1\\     -a_1&x&a_1&0\\
        0&-a_1&x&a_1\\
        -a_1&0&-a_1&x
    \end{pmatrix} 
  +I_{2} \otimes \begin{pmatrix}
   \iota a_2& 0\\
    0& -\iota a_2
  \end{pmatrix}\otimes J_{4}
+ \begin{pmatrix}
   \iota a_3& 0\\
    0& -\iota a_3
  \end{pmatrix}\otimes \iota\begin{pmatrix}
   0 &-1\\
   1& 0 
  \end{pmatrix}\otimes J_{4}.
\end{multline*}
Define, for $i_3 =1, 2$,
\begin{align*}
F_{i_3}&=I_{2} \otimes \begin{pmatrix}
        x&a_1&0&a_1\\     -a_1&x&a_1&0\\
        0&-a_1&x&a_1\\
        -a_1&0&-a_1&x
    \end{pmatrix} 
  + \begin{pmatrix}
   \iota a_2& (-1)^{i_3-1} a_3\\
    (-1)^{i_3}a_3& -\iota a_2
  \end{pmatrix}\otimes J_{4},
  \end{align*}
and $\det \mathcal{K}_G = \det F_{1} \det F_{2}$.
Now
    \begin{align*}
   F_{1} \sim  F_{2} \sim  &\, I_{2} \otimes \begin{pmatrix}
        x&a_1&0&a_1\\     -a_1&x&a_1&0\\
        0&-a_1&x&a_1\\
        -a_1&0&-a_1&x
    \end{pmatrix} 
  + \begin{pmatrix}
   \iota\sqrt{a_2^2+a_3^2}& 0\\
    0& -\iota\sqrt{a_2^2+a_3^2}
  \end{pmatrix}\otimes J_{4},
 \end{align*}
and thus both $F_1$ and $F_2$ have same the determinant. Hence $\det \mathcal{K}_G=\det F_{1}^2$.
Iterating the same procedure one more time, we get,
\begin{equation*}
\mathcal{Z}_{G}^{Cyl}
=\left(\det{F^{+}_{1,1}}\det{F^{-}_{1,1}}\right)^{2}, 
\end{equation*}
where 
\begin{equation*}
F^{\pm}_{1,1}= \begin{pmatrix}
        x&a_1&0&a_1\\     -a_1&x&a_1&0\\
        0&-a_1&x&a_1\\
        -a_1&0&-a_1&x
    \end{pmatrix} 
    \pm \iota \sqrt{a_2^2+a_3^2}
    J_{4}.
\end{equation*}
Let $V_4$ be the unitary similarity transform defined in \eqref{eqn:UTVn} for $n=4$,
then by \Cref{lem:UTVn}
\[
V_{4}^{*}F^{\pm}_{1,1}V_{4}=
\begin{pmatrix}
   x+2\iota a_1\sin{\frac{\pi}{4}}&0 & 0&\mp\iota e^{\frac{\pi}{4}\iota}\sqrt{a_2^2+a_3^2}\\
   \vspace{1mm}
    0&x+2\iota a_1\sin{\frac{3\pi}{4}}&\mp\iota e^{\frac{3\pi}{4}\iota}\sqrt{a_2^2+a_3^2}&0\\
    \vspace{1mm}
    0&\mp\iota e^{\frac{5\pi}{4}\iota}\sqrt{a_2^2+a_3^2}&x+2\iota a_1\sin{\frac{5\pi}{4}}&0\\
    \vspace{1mm}
   \mp\iota e^{\frac{7\pi}{4}\iota}\sqrt{a_2^2+a_3^2}&0&0& x+2\iota a_1\sin{\frac{7\pi}{4}}
  \end{pmatrix}.
\]
Thus, the partition function of the monopole-dimer model on $G$ is given by
\[
\mathcal{Z}_{G} =(x^2+2a_1^2+a_2^2+a_3^2)^8.
\]
\end{example}
\begin{proof}[Proof of \Cref{thm:PFdgrid_cyl}]
Define 
\[
M_j^d= \begin{cases}
I_{2m_d} \otimes\dots\otimes I_{2m_{2}}\otimes T_{2m_1}(-a_1,x,a_1) &  j = 1 \\
I_{2m_d} \otimes\dots\otimes I_{2m_{j+1}}\otimes T_{2m_j}(-a_j,0,a_j) \otimes J_{2m_{j-1}}\otimes \dots \otimes J_{2m_{1}} & 2\leq j\leq d.
\end{cases}
\]

Using \cref{thm:pf-lvmodel}, the partition function of the {monopole-dimer model} on $(G,\mathcal{O})$ is the determinant of the generalised adjacency matrix, $\mathcal{K}_G$, of $(G,\mathcal{O})$ and the generalised adjacency matrix, $\mathcal{K}_G$ with the boustrophedon labelling
can be written as

\begin{equation}
    \mathcal{K}_G =
    M_1^d+\dots +M_d^d
    +a_1 I_{2m_d} \otimes \cdots \otimes
    I_{2m_{2}}\otimes B_{2m_1}^{Cyl}.
\end{equation}
For $ j\in [d]\setminus\{1\}$, 
define the $2m_j\times 2m_j$ diagonal and antidiagonal matrices 
\begin{equation}
\label{eqn:DandJprime}
D_j = \mathrm{diag}\left(
   2\iota a_j\cos{\frac{\pi}{2m_j+1}},
     \dots, 2\iota a_j\cos{\frac{2m_j\pi}{2m_j+1}}\right) \quad \text{and }\quad
J_{j}^{\prime} = 
\iota^{2m_{j}-1}\mathrm{adiag}\left(
  -1,1,\dots,-1,1\right),
\end{equation}
and
  let 
\begin{align*}
  K_j^d&= 
I_{2m_d} \otimes\dots\otimes I_{2m_{j+1}}\otimes D_j \otimes J_{j-1}^{\prime}\otimes \dots \otimes J_{2}^{\prime}\otimes J_{2m_1},\\
\lambda_{i_d, i_{d-1},\dots,i_{p}}
&=\sqrt{\sum_{s=p}^{d}4a_{s}^2 \cos^2{\frac{i_{s}\pi}{2m_{s}+1}}}, \quad
  1< p\leq d.
\end{align*} 

Let 
$u_k$ be the similarity transformation whose entries are given by \eqref{eqn:UTuk}.
Then
using the unitary transform $u_{2m_d}\otimes\cdots \otimes u_{2m_2} \otimes I_{2m_1}$,  
we see that 
\begin{equation}
\label{eqn:simMatCyl}
    \mathcal{K}_G \sim I_{2m_d} \otimes \cdots \otimes I_{2m_2}\otimes \left(T_{2m_1}(-a_1,x,a_1)+a_1 B_{2m_1}^{Cyl}\right
    )
    +
    K_2^d +\cdots+K_{d}^d.
\end{equation}

Since the first matrix of each tensor product in \eqref{eqn:simMatCyl} is diagonal, $\mathcal{K}_G$ is similar to an $2m_d\times 2m_d$ block diagonal matrix with the block $F_{i_d}$, $i_d\in [2m_d]$, given by
\begin{multline*}
F_{i_d} = I_{2m_{d-1}} \otimes \cdots\otimes I_{2m_2}\otimes \left(T_{2n_1}(-a_1,x,a_1)+a_1 B_{2m_1}^{Cyl}\right
    )
    +
    K_2^{d-1} +\cdots+K_{d-2}^{d-1}\\
    +
\left(D_{d-1}
+2\iota a_d\cos{\frac{i_d\pi}{2m_d+1}} J_{{d-1}}^{\prime}\right) \otimes J_{d-2}^{\prime}\otimes\cdots\otimes J_{2m_{1}}.
  \end{multline*}
Diagonalizing the cruciform matrix (a matrix whose nonzero entries lie only on the diagonal and the antidiagonal)
\[
D_{d-1}+2\iota a_{d}\cos{\frac{i_d\pi}{2m_d+1}}J_{d-1}^{\prime}
\]
leads to the matrix
\[
\mathrm{diag}(\iota\lambda_{i_d,1},-\iota \lambda_{i_d,1},\iota\lambda_{i_d,2},-\iota \lambda_{i_d,2},\dots,\iota\lambda_{i_d,m_{d-1}},-\iota \lambda_{i_d,m_{d-1}}).
\]

Set $F_{i_d}^{+}=F_{i_d}$, $F_{i_d}^{-}=F_{2m_d-i_d+1}$ for $1\leq i_{d}\leq {m_{d}}$, and {observe that $F_{i_d}^{+}\sim F_{i_d}^{-}$.} 
Since the determinant of a matrix is invariant under similarity transformation,
the partition function can now be calculated as
\begin{equation}
\label{eqn:oneProdKD}
\mathcal{Z}_{G}^{Cyl}
=\prod_{i_d=1}^{m_{d}} (\det{F_{i_d}^{+}})^2.
 \end{equation}

Continuing with the above approach, $F_{i_d}^{+}$ is similar to an $2m_{d-1} \times 2m_{d-1}$ block diagonal matrix with blocks $F^{\pm}_{i_d,i_{d-1}}$ ($1\leq i_{d-1}\leq m_{d-1}$), 
resulting 
\begin{equation}
\label{eqn:twoProdKD}
\mathcal{Z}_{G}^{Cyl}
=\prod_{i_d=1}^{m_{d}}\prod_{i_{d-1}=1}^{m_{d-1}}  (\det{F_{i_d,i_{d-1}}^{+}})^4.
 \end{equation}
Inductively,
\begin{equation}
\label{eqn:mlProduct}
 \mathcal{Z}_{G}^{Cyl}
=\prod_{i_d=1}^{m_{d}}\cdots\prod_{i_{2}=1}^{m_{2}}(\det{F^{+}_{i_d,\dots,i_{2}}}\det{F^{-}_{i_d,\dots,i_{2}}})^{2^{d-2}},    
\end{equation}
where 
\begin{equation*}
F^{\pm}_{i_d,\dots,i_{2}}= \left(T_{2m_1}(-a_1,x,a_1)+a_1 B_{2m_1}^{Cyl}\right)
    \pm \iota\lambda_{i_d,\dots,i_{2}}
    J_{2m_1}.
\end{equation*}
Let $V_k$ be the unitary similarity transform defined in \eqref{eqn:UTVn}.
Then by \Cref{lem:UTVn},
\[
V_{2m_1}^{*}F^{\pm}_{i_d,\dots,i_{2}}V_{2m_1}=
\begin{pmatrix}
   x+2\iota a_1\sin{\frac{\pi}{2m_1}}& & &\mp\iota e^{\frac{\pi}{2m_1}\iota}\lambda_{i_d,\dots,i_{2}}\\
    &\ddots&\iddots&\\
   &\iddots&\ddots&\\
   \mp\iota e^{\frac{(4m_1-1)\pi}{2m_1}\iota}\lambda_{i_d,\dots,i_{2}}&&& x+2\iota a_1\sin{\frac{(4m_1-1)\pi}{2m_1}}
  \end{pmatrix},
\]
resulting,
\[
\det{F^{\pm}_{i_d,\dots,i_{2}}}=
\prod_{i_{1}=1}^{m_1}\bigg(x^2+4a_1^2\sin^2{\frac{ (2i_1-1)\pi}{2m_1}}+ \sum_{s=2}^{d}4a_{s}^2 \cos^2{\frac{i_{s}\pi}{2m_{s}+1}}\bigg).
\]
Substituting this in \eqref{eqn:mlProduct}, concludes the proof.
\end{proof}

\subsection{Grids with mixed boundary conditions}
Consider $d$-dimensional grids with mixed boundary conditions which are obtained from $d$-dimensional grids by imposing cylindrical boundary conditions in the first $\ell$ directions and keeping the rest free. A similar proof technique can be used to prove the following two theorems.
\begin{thm}
\label{thm:PFdgrid_Mix}
Let $G$ be the $\ell$-cylindrical grid graph $Q^{\ell}_{2m_1,\dots,2m_d}$ with boustrophedon labelling in $d$ dimension. {Let $(G,\mathcal{O})$ be obtained from $G$ 
by orienting the edges from a lower-labelled vertex to a higher-labelled vertex.} Let the vertex weights be $x$ for all vertices of $G$, and edge weights be $a_1,\dots,a_d$ for the edges 
along the different coordinate axes. Then the partition function of the {monopole-dimer} model on {$G$} is given by
        \begin{equation*}
     \mathcal{Z}_{2m_1,\dots,2m_d}^{\text{Mix}}=\prod_{i_{1}=1}^{m_1}\cdots\prod_{i_{d}=1}^{m_d}\left(x^2
     +\sum_{s=1}^{\ell}4a_s^2\sin^2{\frac{ (2i_s-1)\pi}{2m_s}}
     +\sum_{t=\ell+1}^d4a_t^2\cos^2{\frac{ i_t\pi}{2m_t+1}}\right)^{2^{d-1}}. 
   \end{equation*}
\end{thm}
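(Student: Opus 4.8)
The plan is to mirror the proof of \Cref{thm:PFdgrid_cyl} essentially verbatim, since the $\ell$-cylindrical grid $Q^{\ell}_{2m_1,\dots,2m_d}$ is the oriented Cartesian product $C_{2m_1}\square\cdots\square C_{2m_\ell}\square P_{2m_{\ell+1}}\square\cdots\square P_{2m_d}$. By \Cref{thm:pf-lvmodel} the partition function is $\det\mathcal{K}_G$, and using the boustrophedon labelling the generalised adjacency matrix decomposes as a sum of Kronecker products. The only structural difference from the pure cylindrical case is that the boundary terms of the form $a_s I\otimes\cdots\otimes B_{2m_s}^{Cyl}\otimes\cdots$ now appear for each of the first $\ell$ coordinates rather than only for the first one, reflecting the extra cyclic directions.

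First I would write $\mathcal{K}_G=M_1^d+\cdots+M_d^d+\sum_{s=1}^{\ell}a_s\,(I_{2m_d}\otimes\cdots\otimes B_{2m_s}^{Cyl}\otimes\cdots\otimes J_{2m_{s-1}}\otimes\cdots\otimes J_{2m_1})$, adapting the $M_j^d$ notation from the previous proof. Then I would apply the unitary transform $u_{2m_d}\otimes\cdots\otimes u_{2m_{\ell+1}}\otimes I_{2m_\ell}\otimes\cdots\otimes I_{2m_1}$ from \Cref{lem:UTuk} to diagonalize the Toeplitz blocks in the $d-\ell$ free (path) directions, producing the $D_j$ factors and turning each $J_{2m_j}$ into $J_j'$ for $j>\ell$. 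This peels off the free directions exactly as in \Cref{thm:PFdgrid_cyl}, collapsing the product over those coordinates into the $\sum_{t=\ell+1}^d 4a_t^2\cos^2(i_t\pi/(2m_t+1))$ contribution and repeatedly halving via the $F^{\pm}$ block-diagonalization, contributing the overall exponent bookkeeping toward $2^{d-1}$.

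Next I would treat the $\ell$ cyclic directions. For each such direction $s$ the relevant one-dimensional block is $T_{2m_s}(-a_s,x,a_s)+a_s B_{2m_s}^{Cyl}$, which is precisely the matrix \Cref{lem:UTVn} diagonalizes via $V_{2m_s}$ into $\mathrm{diag}(x+2\iota a_s\sin\frac{(2j-1)\pi}{2m_s})$. Applying $V_{2m_s}$ in each cyclic direction and tracking how $V_n^{-1}J_nV_n$ becomes an antidiagonal phase matrix, the off-diagonal couplings recombine (as in the worked example, where $\iota a_2, a_3$ merge into $\iota\sqrt{a_2^2+a_3^2}$) so that each cyclic coordinate $s$ eventually contributes a term $4a_s^2\sin^2\frac{(2i_s-1)\pi}{2m_s}$ to the diagonal entry. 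Collecting all contributions gives each factor $x^2+\sum_{s=1}^\ell 4a_s^2\sin^2\frac{(2i_s-1)\pi}{2m_s}+\sum_{t=\ell+1}^d 4a_t^2\cos^2\frac{i_t\pi}{2m_t+1}$ raised to $2^{d-1}$.

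The main obstacle is the interaction between the $\ell$ cyclic directions: unlike the single-cylinder case where one $V$-transform suffices, here one must diagonalize several $V$-transformed blocks simultaneously and verify that the antidiagonal phase matrices $V_n^{-1}J_nV_n$ from \Cref{lem:UTVn} interact consistently across directions so that the sines combine additively in the final diagonal without introducing cross terms or disrupting the iterated $F^{\pm}$ pairing. Concretely, I expect to need an inductive argument showing that after handling $s$ cyclic directions the remaining matrix is again of the same cruciform/Toeplitz-plus-boundary shape in the next cyclic coordinate, so that \Cref{lem:UTVn} applies repeatedly; keeping track of the phase factors $e^{\iota\theta_j}$ and confirming they do not obstruct the subsequent applications is where the bookkeeping is delicate, though no genuinely new idea beyond the cylindrical proof should be required.
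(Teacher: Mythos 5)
Your proposal is correct and matches the paper's intent exactly: the paper gives no separate proof of \Cref{thm:PFdgrid_Mix}, stating only that ``a similar proof technique'' to that of \Cref{thm:PFdgrid_cyl} applies, and your plan---adding a $B_{2m_s}^{Cyl}$ boundary term for each of the first $\ell$ coordinates, using $u_k$ from \Cref{lem:UTuk} on the free directions and $V_{2m_s}$ from \Cref{lem:UTVn} on the cyclic ones, and checking that the resulting cruciform blocks still pair into $\pm\iota\sqrt{\lambda^2+4a_s^2\sin^2\theta}$---is precisely that adaptation.
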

\subsection{Grids with toroidal boundary conditions}

Let us now consider $d$-dimensional grids with toroidal boundary conditions which are obtained by imposing cylindrical boundary conditions in all directions of a $d$-dimensional grid.
\begin{thm}
\label{thm:PFdgrid_Tor}
Let $G$ be the toroidal grid graph $Q^{\text{Tor}}_{2m_1,\dots,2m_d}$ with boustrophedon labelling in $d$ dimension. {Let $(G,\mathcal{O})$ be obtained from $G$ 
by orienting the edges from a lower-labelled vertex to a higher-labelled vertex.} Let the vertex weights be $x$ for all vertices of $G$, and edge weights be $a_1,\dots,a_d$ for the edges 
along the different coordinate axes. Then the partition function of the {monopole-dimer}  model on {$G$} is given by
        \begin{equation*}
     \mathcal{Z}_{2m_1,\dots,2m_d}^{\text{Tor}}=\prod_{i_{1}=1}^{m_1}\cdots\prod_{i_{d}=1}^{m_d}\left(x^2+\sum_{s=1}^d4a_s^2\sin^2{\frac{ (2i_s-1)\pi}{2m_s}}\right)^{2^{d-1}}. 
   \end{equation*}
\end{thm}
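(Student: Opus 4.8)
The plan is to adapt the proof of \Cref{thm:PFdgrid_cyl} almost verbatim, since the toroidal grid $Q^{\text{Tor}}_{2m_1,\dots,2m_d}=C_{2m_1}\square\cdots\square C_{2m_d}$ differs from the cylindrical case only in that \emph{every} coordinate direction is now a cycle rather than having the first direction cyclic and the rest being paths. Concretely, the generalised adjacency matrix $\mathcal{K}_G$ obtained from \Cref{thm:pf-lvmodel} will have the same tensor-product structure
\[
\mathcal{K}_G = M_1^d+\cdots+M_d^d + (\text{antidiagonal correction terms}),
\]
except that each factor $T_{2m_j}(-a_j,\cdot,a_j)$ now carries an additional boundary term $a_j B_{2m_j}^{Cyl}=a_j\,\mathrm{adiag}(1,0,\dots,0,-1)$ coming from the wrap-around edge of the $j$-th cycle, for \emph{every} $j\in[d]$ rather than only $j=1$.

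The key steps, in order, are as follows. First I would write down $\mathcal{K}_G$ explicitly in terms of the Kronecker products of $T_{2m_j}(-a_j,x\delta_{j,1},a_j)$, the $J_{2m_j}$ factors, and the $d$ boundary corrections $a_j B_{2m_j}^{Cyl}$. Second, rather than applying $u_k$ to diagonalise the inner Toeplitz blocks as in the cylindrical proof, I would apply the transform $V_{2m_j}$ of \Cref{lem:UTVn} in each of the directions $j=2,\dots,d$; by that lemma, $V_{2m_j}^{-1}\bigl(T_{2m_j}(-a_j,0,a_j)+a_j B_{2m_j}^{Cyl}\bigr)V_{2m_j}$ becomes the diagonal matrix with entries $2\iota a_j\sin\frac{(2k-1)\pi}{2m_j}$, which is precisely what converts the free-boundary cosine into the toroidal sine. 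Third, I would peel off one cyclic direction at a time exactly as in \eqref{eqn:oneProdKD}--\eqref{eqn:mlProduct}: after diagonalising the cruciform blocks $D_j + (\text{antidiagonal})$ and pairing $F_{i_j}^{+}\sim F_{i_j}^{-}$, each direction contributes a product over $i_j\in[m_j]$ with multiplicity doubling, producing the overall exponent $2^{d-1}$. Finally, in the last ($j=1$) direction I would diagonalise
\[
F^{\pm}_{i_d,\dots,i_2}=\bigl(T_{2m_1}(-a_1,x,a_1)+a_1 B_{2m_1}^{Cyl}\bigr)\pm\iota\lambda_{i_d,\dots,i_2}J_{2m_1}
\]
using $V_{2m_1}$, where now $\lambda_{i_d,\dots,i_2}=\sqrt{\sum_{s=2}^{d}4a_s^2\sin^2\frac{(2i_s-1)\pi}{2m_s}}$ carries sines instead of cosines, yielding the factor $x^2+4a_1^2\sin^2\frac{(2i_1-1)\pi}{2m_1}+\sum_{s=2}^{d}4a_s^2\sin^2\frac{(2i_s-1)\pi}{2m_s}$ and hence the claimed formula.

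The main obstacle, and the one point where the toroidal proof genuinely departs from the cylindrical one, is the bookkeeping of the antidiagonal $J$-factors under the $V_{2m_j}$ transforms. In \Cref{lem:UTuk} the transform $u_k$ sends $J_k$ to a clean signed antidiagonal $J_k'$, whereas \Cref{lem:UTVn} sends $J_n$ to $-\mathrm{adiag}(e^{\iota\pi/n},\dots,e^{\iota(2n-1)\pi/n})$, a \emph{complex} antidiagonal with phases. I would need to verify that when these phase-carrying antidiagonals are combined across several simultaneously-cyclic directions, the cruciform block $D_j + 2\iota a_{j+1}\sin(\cdots)\,(\text{antidiagonal phase})\otimes\cdots$ still diagonalises to $\mathrm{diag}(\iota\lambda,-\iota\lambda,\dots)$ with a \emph{real} $\lambda$, so that the $F^{+}\sim F^{-}$ pairing goes through and no spurious cross-terms survive. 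Once the phases are tracked carefully and shown to contribute only an overall unimodular factor that cancels in the determinant, the induction closes and the sines replace the cosines uniformly in all $d$ directions, giving the stated product.
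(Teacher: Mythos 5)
Your proposal is correct and follows essentially the route the paper intends: it adapts the cylindrical proof by replacing the transforms $u_{2m_j}$ with $V_{2m_j}$ in every direction (the paper itself omits the toroidal proof, noting only that "a similar proof technique" applies, and its Klein-bottle proof confirms that the phase-carrying antidiagonals $-\mathrm{adiag}(e^{\iota\pi/n},\dots,e^{\iota(2n-1)\pi/n})$ from \Cref{lem:UTVn} are handled exactly as you describe). Your identification of the one genuine point needing care — that the cruciform blocks still diagonalise to $\mathrm{diag}(\iota\lambda,-\iota\lambda,\dots)$ with real $\lambda$ because the paired phases $e^{\iota\theta_k}e^{-\iota\theta_k}=1$ cancel in each $2\times 2$ determinant — is precisely the right check, and it goes through.
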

For the rest of the paper, we will focus on higher dimensional Möbius and Klein grid graphs.
\section{High-dimensional 
M\"{o}bius grid graphs}
\label{sec:HDNOrienGrid}
In this section, we will extend the product formula \eqref{eqn:2dMob} for three-dimensional grids and show that the formula doesn't extend to higher dimensions in the obvious way. {We will use the notation $\Bar{A}$ and $A^{*}$ to denote the conjugate and conjugate transpose of the matrix $A$, respectively.}
\begin{lem}
\label{lem:det2by2Block}
    Let $A$ and $B$ be two square matrices of order $n$ such that
    $\overline{B}A^*=\overline{A} \,\overline{B}$, and $B^t=B$.
    Then, the determinant of the block matrix
\[
\det\begin{pmatrix}
    A& B\\
    -\overline{B}& \overline{A}
\end{pmatrix}=
\det{(AA^*+B{B}^*)}.
\]
\end{lem}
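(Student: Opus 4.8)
The goal is to evaluate $\det M$ for $M=\begin{pmatrix} A & B\\ -\overline B & \overline A\end{pmatrix}$ and to show it equals $\det(AA^*+BB^*)$. The plan is to first reduce the two hypotheses to a convenient form, then apply the Schur complement under the temporary assumption that $A$ is invertible, and finally remove that assumption by a perturbation argument. As a preliminary, I would record the consequence of the symmetry hypothesis: since transposition and conjugation commute, $B^*=(\overline B)^t=\overline{B^t}=\overline B$. The second hypothesis $\overline B A^*=\overline A\,\overline B$ is, as it turns out, exactly the identity needed to factor an $\overline A$ out of the Schur complement, so both hypotheses feed directly into the computation.

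Assume now that $A$ (hence $\overline A$ and $A^*$) is invertible. Since the lower-left block of $M$ is $-\overline B$, the Schur complement formula gives $\det M=\det A\cdot\det(\overline A+\overline B A^{-1}B)$. Right-multiplying the hypothesis $\overline B A^*=\overline A\,\overline B$ by $(A^*)^{-1}$ yields $\overline B=\overline A\,\overline B(A^*)^{-1}$, so $\overline B A^{-1}B=\overline A\,\overline B(AA^*)^{-1}B$ and therefore $\overline A+\overline B A^{-1}B=\overline A\big(I+B^*(AA^*)^{-1}B\big)$, where I have used $\overline B=B^*$. Taking determinants and using $\det A\,\det\overline A=|\det A|^2=\det(AA^*)$, I obtain $\det M=\det(AA^*)\,\det\!\big(I+B^*(AA^*)^{-1}B\big)$. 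Finally, Sylvester's identity $\det(I+XY)=\det(I+YX)$, applied with $X=(AA^*)^{-1}B$ and $Y=B^*$, gives $\det\!\big(I+B^*(AA^*)^{-1}B\big)=\det\!\big(I+(AA^*)^{-1}BB^*\big)$, whence $\det M=\det(AA^*)\det\!\big(I+(AA^*)^{-1}BB^*\big)=\det(AA^*+BB^*)$, which is the claim for invertible $A$.

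To drop the invertibility assumption I would observe that the hypotheses are stable under the shift $A\mapsto A+\epsilon I$: the matrix $B$ stays symmetric, and $\overline B(A+\epsilon I)^*=\overline B A^*+\overline\epsilon\,\overline B=\overline A\,\overline B+\overline\epsilon\,\overline B=\overline{(A+\epsilon I)}\,\overline B$, so $(A+\epsilon I,B)$ again satisfies both conditions for every $\epsilon$. Since $A+\epsilon I$ is invertible for all but finitely many $\epsilon$, the identity just proved holds for the perturbed pair for generic $\epsilon$; as both $\det\begin{pmatrix} A+\epsilon I & B\\ -\overline B & \overline{A+\epsilon I}\end{pmatrix}$ and $\det\big((A+\epsilon I)(A+\epsilon I)^*+BB^*\big)$ are continuous in $\epsilon$, letting $\epsilon\to 0$ delivers the result for $A$ itself. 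The one point that requires care — and the main obstacle — is exactly this singular case, since the Schur-complement factorization is unavailable when $A$ fails to be invertible; the shift $A+\epsilon I$ resolves it cleanly, and it is worth verifying explicitly (as above) that the shift preserves both hypotheses. As an independent check on the sign, one can verify directly from the two hypotheses that $MM^*=\mathrm{diag}\big(AA^*+BB^*,\ \overline{AA^*+BB^*}\big)$, which already forces $|\det M|=\det(AA^*+BB^*)$; the Schur computation above is what additionally pins down that the equality holds with the correct sign.
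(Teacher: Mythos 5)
Your proof is correct, and it rests on the same central idea as the paper's: a Schur-complement evaluation of the block determinant, combined with the two hypotheses in exactly the same roles ($\overline{B}=B^{*}$ from the symmetry of $B$, and the intertwining relation $\overline{B}A^{*}=\overline{A}\,\overline{B}$ to absorb the cross term into a factor of $\overline{A}$). The execution differs in a few ways worth noting. You pivot on the top-left block $A$, writing $\det M=\det A\cdot\det(\overline{A}+\overline{B}A^{-1}B)$, which forces you to invoke Sylvester's identity $\det(I+XY)=\det(I+YX)$ to recombine $\det(AA^{*})$ with the remaining factor; the paper instead pivots on the bottom-right block $\overline{A}$ and multiplies the Schur complement on the right by $\overline{A}^{t}=A^{*}$, which produces $AA^{*}+B\overline{A}^{-1}\overline{B}A^{*}$ in one step and avoids Sylvester. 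More substantively, you handle singular $A$ by perturbing to $A+\epsilon I$, checking that both hypotheses survive the shift, and passing to the limit by continuity; the paper's proof tacitly assumes $\overline{A}$ is invertible, so your perturbation argument actually closes a small gap in the published argument. Your closing observation that $MM^{*}=\mathrm{diag}\bigl(AA^{*}+BB^{*},\ \overline{AA^{*}+BB^{*}}\bigr)$ is a pleasant consistency check, but, as you correctly note, it only pins down $|\det M|$ and cannot replace the Schur computation.
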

\begin{proof}
It is well known that
\begin{align*}
\det\begin{pmatrix}
    A& B\\
    C& D
\end{pmatrix}&=\det{(A-BD^{-1}C)}\det{D}\\
&=\det{(A-BD^{-1}C)}\det{D^t},\\
&=\det{(AD^t-BD^{-1}CD^t)}.
\end{align*}
Here, we have $D=\overline{A}$ and $C=-\overline{B}$, therefore
\begin{align*}
\det\begin{pmatrix}
    A& B\\
    -\overline{B}& \overline{A}
\end{pmatrix}
&=\det{(A\overline{A}^t+B\overline{A}^{-1}\overline{B}A^{*})}\\
&=\det{(A\overline{A}^t+B\overline{A}^{-1}\overline{A}B^{*})}\\
&=\det{(A{A}^{*}+BB^{*})}.
\end{align*}
\end{proof}
\begin{definition}
\label{def:HDMob}
Let $Q_{n_1, \dots,n_d}$ be the $d$-dimensional grid graph, we add an edge between the vertices $(1,k_2,\dots,k_d)$ and $(n_1,n_2-k_2+1,\dots,n_d-k_d+1)$ for all $1\leq k_i\leq n_i \,(2\leq i\leq d)$ to obtain $Q_{n_1, \dots,n_d}$ with \emph{M\"{o}bius boundary condition} along the first direction. We call these edges as \emph{dashed edges} and the remaining as \emph{solid edges}. We call this new graph as $d$-dimensional \emph{Möbius grid graph} and denote it as $Q^{\text{Möb}}_{n_1,\dots,n_d}$.
\end{definition}

Let $G=Q^{\text{Möb}}_{n_1,\dots,n_d}$ be the $d$-dimensional Möbius grid graph with boustrophedon labelling. 
Orient the solid edges from lower-labelled vertex to higher-labelled vertex, orient the dashed edge at $1$ outward and the remaining dashed edges such that each two-dimensional square satisfies the clockwise-odd property. Let us denote the resulting oriented graph as $(G,{\mathcal{O}})$.
We will always orient the edges coming from the M\"{o}bius boundary condition as described above.
\Cref{fig:MobGrid} shows such an orientation over the Möbius grid graph $Q^{\text{Möb}}_{4, 2,2}$.
\begin{definition}
We define the \emph{monopole-dimer model} on the $d$-dimensional Möbius grid graph $G$ as the loop-vertex model on $G$  with the above orientation $\mathcal{O}$. \emph{The partition function} of the monopole-dimer model is then the partition function of the loop-vertex model.
\end{definition}
\begin{thm}
\label{thm:PF3grid_Mob}
Let $G$ be the three-dimensional Möbius grid graph $Q^{\text{Möb}}_{2m_1, 2m_2,2m_3}$ with boustrophedon labelling. Let the vertex weights be $x$ for all vertices of $G$, and edge weights be $a_1,a_2$ and $a_3$ for the edges 
along the $x$-,$y$- and $z$- coordinate axes respectively. Then the partition function of the monopole-dimer model on $G$ is given by
        \begin{equation*}
     \mathcal{Z}_{2m_1,2m_2,2m_3}^{\text{Möb}}=\prod_{i_{1}=1}^{m_1}\prod_{i_{2}=1}^{m_2}\prod_{i_{3}=1}^{m_3}\bigg(x^2+4a_1^2\sin^2{\frac{ (4i_1-1)\pi}{4m_1}}+4a_2^2\cos^2{\frac{i_{2}\pi }{2m_{2}+1}}+4a_3^2\cos^2{\frac{i_{3}\pi }{2m_{3}+1}}\bigg )^{4}. 
   \end{equation*}
\end{thm}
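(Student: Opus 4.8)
The plan is to follow the template of the cylindrical case (\Cref{thm:PFdgrid_cyl}): by \Cref{thm:pf-lvmodel} the partition function equals $\det\mathcal{K}_G$, so I would first write the generalised adjacency matrix of the oriented M\"obius grid $(G,\mathcal{O})$ as a sum of Kronecker products. The ``bulk'' part $M_1^3+M_2^3+M_3^3$ is identical to the cylindrical case, since the boustrophedon labelling and the interior edges are unchanged. The genuinely new ingredient is the M\"obius boundary term coming from the dashed edges $(1,k_2,k_3)\sim(2m_1,\,2m_2-k_2+1,\,2m_3-k_3+1)$. Because the twist reverses the second and third coordinates while joining the two ends of the first direction, I would record that this term factors in the tensor structure as $a_1$ times $J_{2m_3}\otimes J_{2m_2}$ (the two reversals) tensored with a corner matrix built from $B_{2m_1}^{\text{M\"ob}}$ in the first factor, with signs fixed by the prescribed clockwise-odd orientation of the dashed edges.

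Next I would apply the unitary transform $u_{2m_3}\otimes u_{2m_2}\otimes I_{2m_1}$ of \Cref{lem:UTuk}, exactly as in the cylindrical proof. This diagonalises the $T$-matrices in the second and third directions (producing the blocks $D_2,D_3$) and turns every $J_{2m_2},J_{2m_3}$ into the signed antidiagonal $J'$. The crucial difference from the cylindrical case is that $J'$ is antidiagonal, so in the diagonalised basis the M\"obius term no longer respects the block decomposition: it couples the $(i_2,i_3)$ eigenspace to the reflected $(2m_2+1-i_2,\,2m_3+1-i_3)$ eigenspace, and the clean block-diagonalisation of \eqref{eqn:simMatCyl} breaks down.

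The third step exploits the conjugation symmetry this produces. Since the diagonal entries of $D_2,D_3$ at the reflected index $2m+1-i$ are the negatives, hence (as $x$ and the $a_s$ are real) the complex conjugates, of those at index $i$, pairing each $(i_2,i_3)$ with its reflection organises $\mathcal{K}_G$ into block-diagonal form with $4m_1\times 4m_1$ blocks of the shape $\begin{pmatrix} A & B\\ -\overline{B}& \overline{A}\end{pmatrix}$, where $A$ comprises the first-direction tridiagonal $T_{2m_1}(-a_1,x,a_1)$ together with the antidiagonal contribution $\iota\lambda_{i_3,i_2}J_{2m_1}$ assembled from the direction-2 and -3 eigenvalues, and $B$ carries the M\"obius corner-coupling $B_{2m_1}^{\text{M\"ob}}$. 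Here I would invoke \Cref{lem:det2by2Block}, after verifying its hypotheses $B^t=B$ and $\overline{B}A^*=\overline{A}\,\overline{B}$, to collapse each block to the $2m_1\times 2m_1$ Hermitian determinant $\det(AA^*+BB^*)$.

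Finally I would diagonalise $AA^*+BB^*$; the $+1,+1$ corner structure of $B_{2m_1}^{\text{M\"ob}}$ (in contrast to the $+1,-1$ of the cylindrical $B_{2m_1}^{Cyl}$) shifts the relevant angles from $\tfrac{(2i_1-1)\pi}{2m_1}$ to $\tfrac{(4i_1-1)\pi}{4m_1}$, which is precisely the Lu--Wu M\"obius eigenvalue, and the $\sin^2$-pairing $i_1\leftrightarrow i_1+m_1$ reduces the $2m_1$ eigenvalues to the range $[m_1]$ with a squaring; combined with the $\cos^2$-pairing $i_2\leftrightarrow 2m_2+1-i_2$ this yields the exponent $4=2^{3-1}$ and the triple product over $i_1\in[m_1],\,i_2\in[m_2],\,i_3\in[m_3]$. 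I expect the main obstacle to be the third step: identifying the exact reflection-pairing and the precise signs that put the blocks into the normal form $\begin{pmatrix} A & B\\ -\overline{B}& \overline{A}\end{pmatrix}$ demanded by \Cref{lem:det2by2Block}, since the clockwise-odd orientation of the dashed edges and the interaction of the boustrophedon reversals with the $u$-transform must be reconciled carefully; the concluding diagonalisation that produces the $\tfrac{(4i_1-1)\pi}{4m_1}$ angle is the other delicate computation.
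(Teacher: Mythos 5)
Your skeleton does match the paper's proof: write $\mathcal{K}_G$ as a sum of Kronecker products, conjugate by $u_{2m_3}\otimes u_{2m_2}\otimes I_{2m_1}$ from \Cref{lem:UTuk}, pair conjugate eigenspaces into blocks of the form $\bigl(\begin{smallmatrix} A&B\\ -\overline{B}&\overline{A}\end{smallmatrix}\bigr)$, invoke \Cref{lem:det2by2Block}, and diagonalise $AA^*+BB^*$. But you have the roles of $A$ and $B$ exactly backwards, and this is not cosmetic. In the paper, after the reflection pairing and one further $2\times 2$ unitary, the M\"obius corner term lands on the \emph{diagonal} of the block: $A=T_{2m_1}(-a_1,x,a_1)\mp\iota a_1B_{2m_1}^{\text{M\"ob}}$ is a twisted path operator whose singular values give exactly $x^2+4a_1^2\sin^2\frac{(4i_1-1)\pi}{4m_1}$, while $B$ is a \emph{scalar} multiple of $J_{2m_1}$ carrying the direction-$2$ and $-3$ eigenvalues, so that $BB^*=\bigl(4a_2^2\cos^2\frac{i_2\pi}{2m_2+1}+4a_3^2\cos^2\frac{i_3\pi}{2m_3+1}\bigr)I_{2m_1}$ and $AA^*+BB^*$ is diagonalised by a single transform. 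With your assignment ($A=T_{2m_1}+\iota\lambda J_{2m_1}$, $B\propto B_{2m_1}^{\text{M\"ob}}$), writing $T_{2m_1}(-a_1,x,a_1)=xI+N$ with $N$ skew-symmetric, the identity $J_{2m_1}T^t=TJ_{2m_1}$ kills the cross terms and leaves $AA^*=x^2I-N^2+\lambda^2I$, which has the \emph{free-boundary} spectrum $x^2+\lambda^2+4a_1^2\cos^2\frac{k\pi}{2m_1+1}$, while $BB^*=a_1^2\,\mathrm{diag}(1,0,\dots,0,1)$ is a rank-two corner perturbation. The sum is then a corner-perturbed pentadiagonal matrix; its determinant is not obtained by any diagonalisation, and the $\frac{(4i_1-1)\pi}{4m_1}$ angles do not drop out of your final step as described.

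A second, related inaccuracy feeds into this. You posit the boundary term as $a_1$ times $J_{2m_3}\otimes J_{2m_2}$ tensored with a corner matrix and conclude that it couples $(i_2,i_3)$ to $(2m_2+1-i_2,\,2m_3+1-i_3)$, breaking the block decomposition in both transverse directions. Under the boustrophedon labelling the correct form is $a_1\,\mathrm{adiag}(1,-1,\dots,1,-1)\otimes\mathrm{diag}(1,-1,\dots,1,-1)\otimes B_{2m_1}^{\text{M\"ob}}$ --- the second factor is diagonal, not antidiagonal --- and after the $u$-transform it becomes $a_1\iota^{2m_3-1}\,\mathrm{diag}(1,-1,\dots,1,-1)\otimes J_{2m_2}\otimes B_{2m_1}^{\text{M\"ob}}$, whose third factor is again diagonal. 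Hence the decomposition into blocks $F_{i_3}$, $i_3\in[2m_3]$, survives intact, and only the second direction requires the reflection pairing $i_2\leftrightarrow 2m_2+1-i_2$. Tracking these alternating signs is precisely what determines on which side of the $2\times2$ block the M\"obius term ends up, i.e., it is what forces the correct assignment of $A$ and $B$; without it the application of \Cref{lem:det2by2Block} and the concluding diagonalisation cannot be carried out.
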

\begin{figure}[h!]
    \centering
    \includegraphics[scale=0.80]{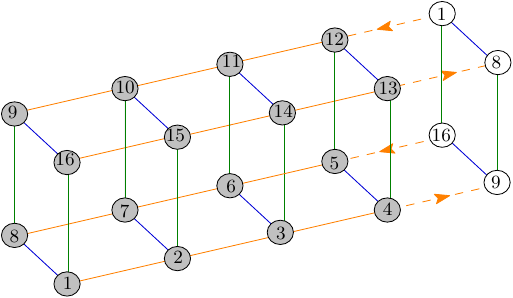}
    \caption{The three-dimensional M\"{o}bius grid graph $Q^{\text{Möb}}_{4, 2,2}$.}
    \label{fig:MobGrid}
\end{figure}
\begin{remark}
The product formula in \Cref{thm:PF3grid_Mob} remains unchanged even if one starts by orienting the dashed edge at $1$ inward and the remaining dashed edges such that each two-dimensional square satisfies the clockwise-odd property.
\end{remark}
\begin{remark}
    Note that the oriented $d$-dimensional Möbius grid $Q^{\text{Möb}}_{2m_1,\dots,2m_d}$ can be regarded as the oriented Cartesian product of $P_{2m_1},\dots,P_{2m_d}$ (oriented from one leaf to another) together with some additional dashed edges oriented in the specified way. We believe (though we have not been able to prove) that the partition function of the monopole-dimer model remains unchanged regardless of the orientation on the path graphs. That's the reason for naming it as the monopole-dimer model.
\end{remark}
\begin{proof}
    Let $D_j$ and $J_{j}^{\prime}$ be as defined in \eqref{eqn:DandJprime}. The generalised adjacency matrix $\mathcal{K}_G$ of the oriented three-dimensional M\"{o}bius grid graph $(G,\mathcal{O})$
is
\begin{multline*}
    \mathcal{K}_G = 
    I_{2m_3} \otimes I_{2m_2}\otimes  T_{2m_1}(-a_1,x,a_1)
    +
I_{2m_3} \otimes T_{2m_2}(-a_2,0,a_2) \otimes J_{2m_{1}}
+
T_{2m_3}(-a_3,0,a_3) \otimes J_{2m_2} \otimes J_{2m_{1}}
\\
+a_1 \mathrm{adiag}\left(
  1,-1,\dots,1,-1\right)
  \otimes \mathrm{diag}\left(
   1,-1
     \dots, 1,-1\right)\otimes B_{2m_1}^{\text{Möb}}
\end{multline*}
Let $u_k$ be the standard unitary similarity transformation defined in \Cref{lem:UTuk}.
Then using the unitary transform $u_{2m_3}\otimes u_{2m_2} \otimes I_{2m_1}$,  
it is clear that 
\begin{multline}
\label{eqn:KGMOB3}
    \mathcal{K}_G\sim 
    I_{2m_3} \otimes I_{2m_2}\otimes  T_{2m_1}(-a_1,x,a_1)
    +
I_{2m_3} \otimes D_{2} \otimes J_{2m_{1}}
+
D_{3} \otimes J_{2}^{\prime} \otimes J_{2m_{1}}
\\
+a_1 \iota^{2m_3-1} \mathrm{diag}\left(
   1,-1
     \dots, 1,-1\right)
    \otimes J_{2m_{2}}\otimes B_{2m_1}^{\text{Möb}}.
\end{multline}
Given that the initial matrix of each tensor product is diagonal in \eqref{eqn:KGMOB3}, $\mathcal{K}_G$ becomes similar to a $2m_3\times 2m_3$ block diagonal matrix, with each block $F_{i_3}$, ($i_3\in [2m_3]$), defined as
\begin{multline}
\label{eqn:Fi3Mob}
F_{i_3}=I_{2m_2}\otimes  T_{2m_1}(-a_1,x,a_1)
    +
 D_{2} \otimes J_{2m_{1}}
+
2\iota a_3\cos{\frac{i_3\pi}{2m_3+1}}J_{2}^{\prime} \otimes J_{2m_{1}}
\\
+a_1(-1)^{i_3-1} \iota^{2m_3-1}J_{2m_2}\otimes B_{2m_1}^{\text{Möb}}.
\end{multline}
Therefore,
\begin{equation}
\label{eqn:KisProdFi3}
\det{\mathcal{K}_G}=\prod_{i_3=1}^{2m_3}\det{F_{i_3}}.
\end{equation}

Applying some simultaneous row and column interchanges on the first matrix in each tensor of \eqref{eqn:Fi3Mob} results in
\begin{align*}
\label{eqn:FinMob}
F_{i_3}\sim & I_{m_2}\otimes I_{2}\otimes  T_{2m_1}(-a_1,x,a_1)
   \\
& +
    \mathrm{diag}\big(
    2\iota a_2\cos{\frac{\pi}{2m_2+1}},     \dots ,2\iota a_2\cos{\frac{m_2\pi}{2m_2+1}}\big)
  \otimes \begin{pmatrix}
   1& 0\\
   0 &-1
  \end{pmatrix} \otimes J_{2m_{1}}\\
&+
2\iota^{2m_2} a_3\cos{\frac{i_3\pi}{2m_3+1}} 
\mathrm{diag}\big(1,-1,1,-1,\dots)
\otimes\begin{pmatrix}
   0& -1\\
   1&0
  \end{pmatrix} \otimes J_{2m_{1}}
\\
&+a_1(-1)^{i_3-1} \iota^{2m_3-1}I_{m_2}\otimes J_{2}
\otimes B_{2m_1}^{\text{Möb}}.
\end{align*}
Consequently,
\begin{align*}
\det{F_{i_3}}=\prod_{i_2=1}^{m_2}
\det&\Big(I_{2}\otimes  T_{2m_1}(-a_1,x,a_1)
+    2\iota a_2\cos{\frac{i_2\pi}{2m_2+1}}
    \begin{pmatrix}
   1& 0\\
   0 &-1
  \end{pmatrix} \otimes J_{2m_{1}}
  \\
&+
2(-1)^{i_2-1}\iota^{2m_2} a_3\cos{\frac{i_3\pi}{2m_3+1}} 
\begin{pmatrix}
   0& -1\\
   1&0
  \end{pmatrix} \otimes J_{2m_{1}}
  +a_1(-1)^{i_3-1} \iota^{2m_3-1} J_{2}
  \otimes B_{2m_1}^{\text{Möb}}\Big).
\end{align*}
Using the unitary transform $\frac{1}{\sqrt{2}}\begin{pmatrix}
    -1&1\\
    1 &1
\end{pmatrix}\otimes I_{2m_1}$,
the determinant of $F_{i_3}$ can be expressed as the following product:
\begin{align*}
\det{F_{i_3}}=\prod_{i_2=1}^{m_2}
\det\Big(&I_{2}\otimes  T_{2m_1}(-a_1,x,a_1)
+    2\iota a_2\cos{\frac{i_2\pi}{2m_2+1}}
    \begin{pmatrix}
   0& -1\\
   -1 &0
  \end{pmatrix} \otimes J_{2m_{1}}
  \\
&+
2(-1)^{i_2-1}\iota^{2m_2} a_3\cos{\frac{i_3\pi}{2m_3+1}} 
\begin{pmatrix}
   0& 1\\
   -1&0
  \end{pmatrix} \otimes J_{2m_{1}}\\
  &+a_1(-1)^{i_3-1} \iota^{2m_3-1} \begin{pmatrix}
   -1& 0\\
   0&1
\end{pmatrix}\otimes B_{2m_1}^{\text{Möb}} \Big).
\end{align*}
Finally, notice that $F_{i_3}$ is the product of determinants of $2\times 2$ block matrices of the following form
\[
\begin{pmatrix}
    A& B\\
    -\overline{B}& \overline{A}
\end{pmatrix}
\]
where
\begin{equation*}
    A=T_{2m_1}(-a_1,x,a_1)-a_1(-1)^{i_3-1} \iota^{2m_3-1}  B_{2m_1}^{\text{Möb}}
\end{equation*}
and
\begin{equation*}
B= \bigg(-2\iota a_2\cos{\frac{i_2\pi}{2m_2+1}}+2(-1)^{i_2-1}\iota^{2m_2} a_3\cos{\frac{i_3\pi}{2m_3+1}} \bigg)
    J_{2m_{1}}.
\end{equation*}
Observe that
\begin{align*}
    \overline{B}A^*=&
    \bigg(2\iota a_2\cos{\frac{i_2\pi}{2m_2+1}}+2(-1)^{i_2-1}\iota^{2m_2} a_3\cos{\frac{i_3\pi}{2m_3+1}} \bigg)
    \\
    &\times J_{2m_{1}}\left(T_{2m_1}(a_1,x,-a_1)+a_1(-1)^{i_3-1} \iota^{2m_3-1}  B_{2m_1}^{\text{Möb}}\right)
\end{align*}
Since, $J_{2m_{1}}$ commutes with $B_{2m_1}^{\text{Möb}}$ and $J_{2m_{1}}T_{2m_1}(a_1,x,-a_1)=T_{2m_1}(-a_1,x,a_1)J_{2m_{1}}$ 
    \begin{align*}    \overline{B}A^*=&\bigg(2\iota a_2\cos{\frac{i_2\pi}{2m_2+1}}+2(-1)^{i_2-1}\iota^{2m_2} a_3\cos{\frac{i_3\pi}{2m_3+1}} \bigg)\\
    &\times
    \Bigg(T_{2m_1}(-a_1,x,a_1)J_{2m_{1}}+a_1(-1)^{i_3-1} \iota^{2m_3-1}  B_{2m_1}^{\text{Möb}}J_{2m_{1}}\Bigg)
\end{align*}
After re-arranging the terms, we get
\begin{align*}
    \overline{B}A^*=&\Bigg(T_{2m_1}(-a_1,x,a_1)+a_1(-1)^{i_3-1} \iota^{2m_3-1}  B_{2m_1}^{\text{Möb}}\Bigg)\\
    &\times\bigg(2\iota a_2\cos{\frac{i_2\pi}{2m_2+1}}+2(-1)^{i_2-1}\iota^{2m_2} a_3\cos{\frac{i_3\pi}{2m_3+1}} \bigg)J_{2m_{1}}
    \\
    =&\overline{A} \,\overline{B}.
\end{align*}
Now, using \Cref{lem:det2by2Block},
we get
\begin{equation}
\label{eqn:detABBconjAconj}
\det\begin{pmatrix}
    A& B\\
    -\overline{B}& \overline{A}
\end{pmatrix}=\det{(AA^*+B{B}^*)}.
\end{equation}
Diagonalizing the matrix $A$ yields the following:
$$A\sim \mathrm{diag}\bigg(x+2(-1)^{m_3+i_3-1}ia_1\sin{\frac{3\pi}{4m_1}},\dots,x+2(-1)^{m_3+i_3-1}\iota a_1\sin{\frac{(4(2m_1)-1)\pi}{4m_1}}\bigg),$$
and
\begin{equation}
\label{eqn:AAstar}
AA^*\sim \mathrm{diag}\bigg(x^2+4a_1^2\sin^2{\frac{3\pi}{4m_1}},\dots,x^2+4a_1^2\sin^2{\frac{(8m_1-1)\pi}{4m_1}}\bigg).
\end{equation}
Further, a quick calculation shows that
\begin{equation}
\label{eqn:BBstar}
BB^*=\bigg(4a_2^2\cos^2{\frac{i_2\pi}{2m_2+1}}+4a_3^2\cos^2{\frac{i_3\pi}{2m_3+1}} \bigg)I_{2m_1}.
\end{equation}
Combining equations \eqref{eqn:detABBconjAconj}, \eqref{eqn:AAstar}, and \eqref{eqn:BBstar} results
\begin{equation*}
\det{F_{i_3}}=\prod_{i_2=1}^{m_2}\prod_{i_1=1}^{2m_1} \bigg(x^2+4a_1^2\sin^2{\frac{(4i_1-1)\pi}{4m_1}}+4a_2^2\cos^2{\frac{i_2\pi}{2m_2+1}}+4a_3^2\cos^2{\frac{i_3\pi}{2m_3+1}}\bigg).
\end{equation*}
Finally, \eqref{eqn:KisProdFi3}
together with the following trigonometric identities
\begin{equation}
\label{eqn:sin4theta}
    \sin{\frac{(4(m_1+i_1)-1)\pi}{4m_1}}=-\sin{\frac{(4i_1-1)\pi}{4m_1}},
\end{equation}
\begin{equation}
\label{eqn:cos2theta}
    \cos{\frac{(2m_3-i_3+1)\pi}{2m_3+1}}=-\cos{\frac{i_3\pi}{2m_3+1}}
\end{equation}
concludes the proof.
\end{proof}

We now provide an example showing that the formula does not generalise for higher dimensions.
\begin{example}
\label{exm:CEMobHD}
    Let $G=Q^{\text{Möb}}_{2,2,2,2}$ be the four-dimensional M\"{o}bius grid as shown in \Cref{fig:4DMob}. The solid edges are oriented from lower labelled vertex to higher labelled vertex and dashed edges are oriented as described in the paragraph just below \Cref{def:HDMob}.
    \begin{figure}[h!]
        \centering        \includegraphics[scale=0.85]{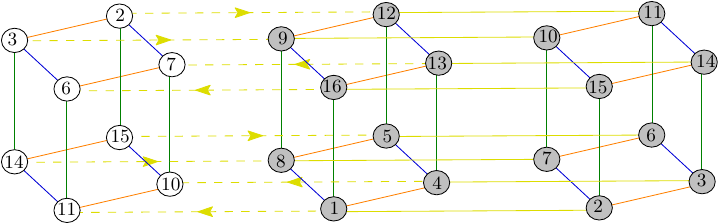}
        \caption{The four-dimensional cube with M\"{o}bius boundary conditions}
        \label{fig:4DMob}
    \end{figure}
    Let the vertex weight be $0$ for all the vertices and edge weights be $a_1,a_2,a_3$ and $a_4$ for the edges 
along the different coordinate axes. Then, the partition function of the monopole-dimer model on $G$ is 
\begin{equation*}
    (4a_1^2 + a_2^2 + a_3^2 + a_4^2)^4(a_2^2 + a_3^2 + a_4^2)^4
\end{equation*} 
which is not an $8^{\text{th}}$ power. This leads us to conclude that the product formula in \eqref{eqn:2dMob} does not generalise to higher dimensions.
\end{example}
Using
\Cref{thm:PFdgrid_cyl,thm:PF3grid_Mob} together with the following identities,
\[
\sin^2{\frac{(2(2i)-1)\pi}{4n}}=
\sin^2{\frac{(2(2n-2i+1)-1)\pi}{4n}}=\sin^2{\frac{(4i-1)\pi}{4n}}, \, \text{ for }1\leq i\leq n,
\]
we can deduce the generalized relationship between the partition function of the monopole-dimer model on the three-dimensional M\"{o}bius and cylindrical grid graphs, akin to the relationship between the partition function of the dimer model on two-dimensional grids embedded on a cylinder and a M\"{o}bius strip~\cite[(24)]{LUWu_1999}.
\begin{thm}
\label{thm:Cyl_mob_relation}
Let $\mathcal{Z}_{4n_1,2n_2,2n_3}^{\text{Cyl}}$ and $\mathcal{Z}_{2n_1,2n_2,2n_3}^{\text{Möb}}$ be the partition function of the monopole-dimer model on the three-dimensional M\"{o}bius grid $Q^{\text{Möb}}_{4n_1,2n_2,2n_3}$ and cylindrical grid $Q^{\text{Cyl}}_{2n_1,2n_2,2n_3}$ with boustrophedon labelling, respectively. Then
\begin{equation}
\label{eqn:Cyl_mob3D_relation}
\mathcal{Z}_{4n_1,2n_2,2n_3}^{\text{Cyl}}=
\left(\mathcal{Z}_{2n_1,2n_2,2n_3}^{\text{Möb}}\right)^2. 
\end{equation}
\end{thm}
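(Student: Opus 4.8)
The plan is to prove \eqref{eqn:Cyl_mob3D_relation} by directly comparing the explicit product formulas of \Cref{thm:PFdgrid_cyl,thm:PF3grid_Mob} and reconciling them through a re-indexing of the product over the first (cyclic) coordinate. First I would specialise \Cref{thm:PFdgrid_cyl} to the cylindrical grid $Q^{\text{Cyl}}_{4n_1,2n_2,2n_3}$ by setting $m_1=2n_1$, $m_2=n_2$, $m_3=n_3$, obtaining
\begin{equation*}
\mathcal{Z}_{4n_1,2n_2,2n_3}^{\text{Cyl}}=\prod_{i_1=1}^{2n_1}\prod_{i_2=1}^{n_2}\prod_{i_3=1}^{n_3}\left(x^2+4a_1^2\sin^2\frac{(2i_1-1)\pi}{4n_1}+4a_2^2\cos^2\frac{i_2\pi}{2n_2+1}+4a_3^2\cos^2\frac{i_3\pi}{2n_3+1}\right)^{4}.
\end{equation*}
Similarly, I would specialise \Cref{thm:PF3grid_Mob} with $m_1=n_1$, $m_2=n_2$, $m_3=n_3$ and square, obtaining
\begin{equation*}
\left(\mathcal{Z}_{2n_1,2n_2,2n_3}^{\text{Möb}}\right)^2=\prod_{i_1=1}^{n_1}\prod_{i_2=1}^{n_2}\prod_{i_3=1}^{n_3}\left(x^2+4a_1^2\sin^2\frac{(4i_1-1)\pi}{4n_1}+4a_2^2\cos^2\frac{i_2\pi}{2n_2+1}+4a_3^2\cos^2\frac{i_3\pi}{2n_3+1}\right)^{8}.
\end{equation*}

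The next step is to observe that the two factors carrying the free (path) directions coincide: the $\cos^2$ terms indexed by $i_2$ and $i_3$, together with their ranges $1\le i_2\le n_2$ and $1\le i_3\le n_3$, appear verbatim in both expressions. Consequently it suffices to show that, for each fixed pair $(i_2,i_3)$, the product over $i_1$ of the cylindrical factor (with exponent $4$) equals the product over $i_1$ of the Möbius factor (with exponent $8$). Since the remaining contents of the parentheses are held fixed, this reduces to the purely trigonometric claim that the multiset $\{\sin^2\frac{(2i_1-1)\pi}{4n_1}:1\le i_1\le 2n_1\}$ equals the multiset $\{\sin^2\frac{(4i_1-1)\pi}{4n_1}:1\le i_1\le n_1\}$ with every element counted twice.

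I would establish this using the identity printed immediately above the theorem. Partition the index set $[2n_1]$ into its even and odd members. For even $i_1=2i$ the argument is $\frac{(4i-1)\pi}{4n_1}$, matching a Möbius factor directly; for odd $i_1=2n_1-2i+1$ the argument is $\frac{(4n_1-4i+1)\pi}{4n_1}=\pi-\frac{(4i-1)\pi}{4n_1}$, and since $\sin(\pi-\theta)=\sin\theta$ this again equals $\sin^2\frac{(4i-1)\pi}{4n_1}$. As $i$ runs through $[n_1]$, the even branch and the odd branch each traverse $[n_1]$ bijectively, so each value $\sin^2\frac{(4i-1)\pi}{4n_1}$ is produced exactly twice. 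This doubling of multiplicity turns the exponent $4$ in the cylindrical product into the exponent $8$ appearing in $(\mathcal{Z}^{\text{Möb}})^2$, and the two expressions become identical, which proves \eqref{eqn:Cyl_mob3D_relation}.

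This argument is elementary and I do not expect a genuine obstacle; the only point requiring care is the bookkeeping of the even/odd split, namely verifying that the two branches together account for every factor with the correct multiplicity, rather than double-counting or leaving gaps. As a consistency check I would also confirm at the outset that $Q^{\text{Cyl}}_{4n_1,2n_2,2n_3}$ and $Q^{\text{Möb}}_{2n_1,2n_2,2n_3}$ have vertex counts in the expected ratio $2:1$ (namely $16n_1n_2n_3$ versus $8n_1n_2n_3$), so that the total degree in $x$ on the two sides of \eqref{eqn:Cyl_mob3D_relation} agrees before the detailed comparison begins.
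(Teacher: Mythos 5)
Your proposal is correct and follows essentially the same route as the paper: the paper likewise derives \eqref{eqn:Cyl_mob3D_relation} by comparing the product formulas of \Cref{thm:PFdgrid_cyl,thm:PF3grid_Mob} and invoking the identity $\sin^2\frac{(2(2i)-1)\pi}{4n}=\sin^2\frac{(2(2n-2i+1)-1)\pi}{4n}=\sin^2\frac{(4i-1)\pi}{4n}$ to pair the even and odd indices of the cylindrical product with the Möbius factors. Your even/odd bookkeeping is exactly the intended argument, just written out in more detail than the paper's one-line justification.
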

\section{High-dimensional 
Klein grid graphs}
\label{sec:Klein_BC}
A $d$-dimensional grid is considered to have \emph{Klein boundary conditions} if it exhibits M\"{o}bius boundary conditions along the first direction and cylindrical boundary conditions along the remaining directions.
\begin{definition}
Let $P_{n_1}\square C_{n_2}\square\cdots\square C_{n_d}$ be the Cartesian product of $P_{n_1},C_{n_1},\dots, C_{n_d}$, we add an edge between the vertices $(1,k_2,\dots,k_d)$ and $(n_1,n_2-k_2+1,\dots,n_d-k_d+1)$ for all $1\leq k_i\leq n_i \,(2\leq i\leq d)$ to obtain a new graph called the $d$-dimensional \emph{Klein grid graph} $Q^{\text{Klein}}_{n_1, \dots,n_d}$. We call these additional edges as \emph{dashed edges} and the remaining as \emph{solid edges}.    
\end{definition}
Let $G=Q^{\text{Klein}}_{n_1,\dots,n_d}$ be the $d$-dimensional Klein grid graph with boustrophedon labelling. 
Orient the solid edges from lower-labelled vertex to higher-labelled vertex, orient the dashed edge at $1$ outward and the remaining dashed edges such that each two-dimensional square satisfies the clockwise-odd property. Let us denote the resulting oriented graph as $(G,{\mathcal{O}})$.
Note that $Q^{\text{Möb}}_{n_1, \dots,n_d}$ is a subgraph of $Q^{\text{Klein}}_{n_1, \dots,n_d}$.
\Cref{fig:KleinGrid} shows such an orientation over the Klein grid graph $Q^{\text{Klein}}_{4, 2,2}$.
\begin{figure}[h!]
    \centering
    \includegraphics[scale=0.80]{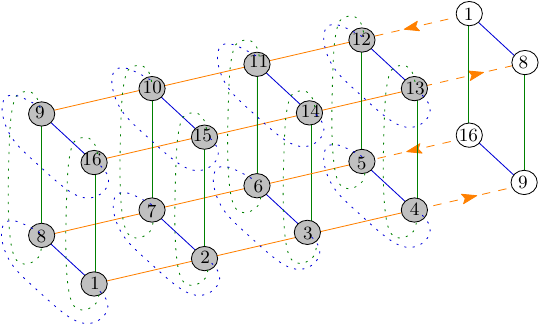}
    \caption{The three-dimensional M\"{o}bius grid graph $Q^{\text{Klein}}_{4, 2,2}$.}
    \label{fig:KleinGrid}
\end{figure}
\begin{definition}
We define the \emph{monopole-dimer model} on the $d$-dimensional Klein grid graph $G$ as the loop-vertex model on $G$  with the above orientation $\mathcal{O}$. \emph{The partition function} of the monopole-dimer model is then the partition function of the loop-vertex model.
\end{definition}
\begin{thm}
\label{thm:PF3grid_Klein}
Let $G=Q^{\text{Klein}}_{2m_1,2m_2,2m_3}$ be the three-dimensional Klein grid graph. Let vertex weights be $x$ for all vertices of $G$, and edge weights be $a_1,a_2$ and $a_3$ for the edges 
along the $x$-,$y$- and $z$- coordinate axes respectively. Then the partition function of the monopole-dimer model on $(G,\mathcal{O})$ is given by
\begin{equation*}
\mathcal{Z}_{2m_1,2m_2,2m_3}^{\text{Klein}}=\prod_{i_{1}=1}^{m_1}\prod_{i_{2}=1}^{m_2}\prod_{i_{3}=1}^{m_3}\bigg(x^2+4a_1^2\sin^2{\frac{ (4i_1-1)\pi}{4m_1}}+4a_2^2\sin^2{\frac{ (2i_2-1)\pi}{2m_2}}+4a_3^2\sin^2{\frac{ (2i_3-1)\pi}{2m_3}}\bigg )^{4}. 
   \end{equation*}
\end{thm}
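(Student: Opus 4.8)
The plan is to run the argument of \Cref{thm:PF3grid_Mob} while upgrading the second and third directions from free to cyclic, so that they are handled by the transform $V_n$ of \Cref{lem:UTVn} (which produces sines) rather than by the transform $u_k$ of \Cref{lem:UTuk} (which produces cosines). Concretely, $Q^{\text{Klein}}_{2m_1,2m_2,2m_3}$ is obtained from the M\"{o}bius grid by closing the paths in directions $2$ and $3$ into cycles, so with the boustrophedon labelling its generalised adjacency matrix is the M\"{o}bius matrix of \Cref{thm:PF3grid_Mob} with the two cyclic boundary terms inserted:
\begin{align*}
\mathcal{K}_G &= I_{2m_3}\otimes I_{2m_2}\otimes T_{2m_1}(-a_1,x,a_1)
+ I_{2m_3}\otimes\big(T_{2m_2}(-a_2,0,a_2)+a_2 B_{2m_2}^{Cyl}\big)\otimes J_{2m_1}\\
&\quad +\big(T_{2m_3}(-a_3,0,a_3)+a_3 B_{2m_3}^{Cyl}\big)\otimes J_{2m_2}\otimes J_{2m_1}
+a_1\,\mathrm{adiag}(1,-1,\dots)\otimes\mathrm{diag}(1,-1,\dots)\otimes B_{2m_1}^{\text{M\"{o}b}}.
\end{align*}
Directions $2$ and $3$ now carry the combination $T+aB^{Cyl}$ that \Cref{lem:UTVn} diagonalises, exactly as in \Cref{thm:PFdgrid_Tor}, while direction $1$ still carries the M\"{o}bius data $T_{2m_1}(-a_1,x,a_1)$ together with $B_{2m_1}^{\text{M\"{o}b}}$.

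First I would apply $V_{2m_3}\otimes V_{2m_2}\otimes I_{2m_1}$. By \Cref{lem:UTVn} the two cyclic combinations become diagonal matrices with entries $2\iota a_2\sin\frac{(2i_2-1)\pi}{2m_2}$ and $2\iota a_3\sin\frac{(2i_3-1)\pi}{2m_3}$, and each factor $J_{2m_2}$, $J_{2m_3}$ becomes an antidiagonal matrix decorated with the phases $e^{\iota\theta_j}$. Because $\sin\frac{(2(i+m)-1)\pi}{2m}=-\sin\frac{(2i-1)\pi}{2m}$, the sine eigenvalues occur in sign-reversed pairs, and the transformed M\"{o}bius term couples exactly these pairs; organising the paired indices into blocks and performing the same simultaneous row and column interchanges as in the proof of \Cref{thm:PF3grid_Mob} should bring $\mathcal{K}_G$ to a block-diagonal matrix whose blocks have the cruciform shape
\[
\begin{pmatrix}
A & B\\ -\overline{B} & \overline{A}
\end{pmatrix},
\qquad
A=T_{2m_1}(-a_1,x,a_1)-a_1(-1)^{i_3-1}\iota^{2m_3-1}B_{2m_1}^{\text{M\"{o}b}},
\]
with $B$ a scalar multiple of $J_{2m_1}$ whose scalar records the two sine eigenvalues coming from directions $2$ and $3$.

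Next I would verify the hypotheses $\overline{B}A^{*}=\overline{A}\,\overline{B}$ and $B^{t}=B$ of \Cref{lem:det2by2Block} as in \Cref{thm:PF3grid_Mob}, using that $J_{2m_1}$ commutes with $B_{2m_1}^{\text{M\"{o}b}}$, that $J_{2m_1}T_{2m_1}(a_1,x,-a_1)=T_{2m_1}(-a_1,x,a_1)J_{2m_1}$, and that $J_{2m_1}$ is symmetric. \Cref{lem:det2by2Block} then collapses each block determinant to $\det(AA^{*}+BB^{*})$. The matrix $AA^{*}$ is identical to the one in the M\"{o}bius proof, so diagonalising $A$ gives $AA^{*}\sim\mathrm{diag}\big(x^2+4a_1^2\sin^2\frac{(4i_1-1)\pi}{4m_1}\big)$; the only genuinely new computation is $BB^{*}$, which I expect to equal the scalar $\big(4a_2^2\sin^2\frac{(2i_2-1)\pi}{2m_2}+4a_3^2\sin^2\frac{(2i_3-1)\pi}{2m_3}\big)I_{2m_1}$, the unit-modulus phases introduced by $V$ cancelling in $BB^{*}=B\overline{B}^{\,t}$. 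Multiplying the resulting factors over $i_1,i_2,i_3$ and folding the index ranges down to $1\le i_s\le m_s$ by means of \eqref{eqn:sin4theta} together with the analogous identity $\sin\frac{(2(i+m)-1)\pi}{2m}=-\sin\frac{(2i-1)\pi}{2m}$ for the cyclic directions then yields the stated product with common exponent $4$.

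The hard part will be the action of $V_{2m_3}\otimes V_{2m_2}$ on the M\"{o}bius boundary term. In the pure M\"{o}bius proof the transform $u_k$ conveniently sent the signed antidiagonal $\mathrm{adiag}(1,-1,\dots)$ to a signed diagonal and $\mathrm{diag}(1,-1,\dots)$ to a clean $J$, so directions $2$ and $3$ separated cleanly; the Fourier-type transform $V_n$ instead turns $\mathrm{diag}(1,-1,\dots)$ into a shift by $m$ in the frequency index and $\mathrm{adiag}(1,-1,\dots)$ into a block-diagonal antidiagonal matrix. The work lies in checking that these shifts and phases are compatible with the antidiagonal couplings coming from the transformed $J_{2m_2}$, $J_{2m_3}$ and with the sign-reversed pairing of the sine eigenvalues, so that the reduction to the cruciform form above, with a symmetric $B$, really does go through; once this is confirmed, the remainder is a transcription of the M\"{o}bius computation with every $\cos\frac{i\pi}{2m+1}$ replaced by $\sin\frac{(2i-1)\pi}{2m}$.
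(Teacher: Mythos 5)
Your overall strategy coincides with the paper's: the same generalised adjacency matrix, the same unitary $V_{2m_3}\otimes V_{2m_2}\otimes I_{2m_1}$ applied via \Cref{lem:UTVn} to the two cyclic directions, the same reduction to blocks of the form $\bigl(\begin{smallmatrix} A & B\\ -\overline{B}&\overline{A}\end{smallmatrix}\bigr)$ handled by \Cref{lem:det2by2Block}, and the same endgame in which $AA^{*}$ supplies the $\sin^{2}\frac{(4i_1-1)\pi}{4m_1}$ factors and $BB^{*}$ is a nonnegative multiple of the identity. However, the step you yourself defer as ``the hard part'' is exactly where the content of the proof lies, and your sketch of it is not accurate, so as written the argument has a genuine gap there.

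The transformed M\"{o}bius term does \emph{not} couple the sign-reversed pairs. Under $V_{2m_3}$ the factor $\mathrm{adiag}(1,-1,\dots,1,-1)$ becomes block diagonal with two antidiagonal $m_3\times m_3$ blocks carrying phases $e^{\iota(2k-1)\pi/2m_3}$; it couples $k$ with $m_3-k+1$ inside each half, and these two frequencies carry \emph{equal} sines, since $\sin\frac{(2(m_3-k+1)-1)\pi}{2m_3}=\sin\frac{(2k-1)\pi}{2m_3}$, not opposite ones. The sign-reversed pairing $k\leftrightarrow k+m_3$ serves a different purpose: via $e^{\iota(2(m_3+k)-1)\pi/2m_3}=-e^{\iota(2k-1)\pi/2m_3}$ (identities \eqref{eqn:TriId}) it splits the direction-$3$ index into $I_2\otimes(\cdot)$ and $\mathrm{diag}(1,-1)\otimes(\cdot)$ pieces. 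Two distinct pairings are therefore needed in each cyclic direction, and, crucially, the second pairing $k\leftrightarrow m-k+1$ on $\{1,\dots,m\}$ has a fixed point when $m$ is odd, so the paper must run a separate case analysis for odd $m_2$ and odd $m_3$: the unpaired middle frequency produces an extra unsquared factor that is only reabsorbed at the very end using \eqref{eqn:sin2theta}. Your plan does not anticipate this parity split at all. Moreover, your proposed block data ($A$ of size $2m_1$ equal to the $A$ of the M\"{o}bius proof, $B$ a scalar multiple of $J_{2m_1}$) is not what the reduction produces: because the direction-$2$ phases $e^{\iota(2k-1)\pi/2m_2}$ cannot be removed before \Cref{lem:det2by2Block} is invoked, the paper's $A$ is $I_{m_2}\otimes\bigl(T_{2m_1}(-a_1,x,a_1)-\iota\alpha a_1B_{2m_1}^{\text{M\"{o}b}}\bigr)$ and $B$ is a genuine diagonal matrix tensored with $J_{2m_1}$, with the coefficient of $B_{2m_1}^{\text{M\"{o}b}}$ being $\iota\alpha$ rather than $(-1)^{i_3-1}\iota^{2m_3-1}$. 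The skeleton of your proposal is correct, but the reduction to cruciform blocks---the one step you leave unchecked---requires the two-stage pairing and the even/odd case analysis, and in the form you describe it would not go through.
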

\begin{proof}
    Again, let $T_k,J_k,D_j$ and $J_{j}^{\prime}$ be as defined in the proof of \Cref{thm:PFdgrid_cyl}.
    The generalised adjacency matrix, $\mathcal{K}_G$ of the oriented three-dimensional grid graph with Klein boundary conditions, $(G,\mathcal{O})$
is
\begin{multline*}
    \mathcal{K}_G = 
    I_{2m_3} \otimes I_{2m_2}\otimes  T_{2m_1}(-a_1,x,a_1)
+a_1 \mathrm{adiag}\left(
   1,-1
     \dots, 1,-1\right)
     \otimes \mathrm{diag}\left(
   1,-1
     \dots, 1,-1\right)
\otimes B_{2m_1}^{\text{Möb}}
    \\
        +
I_{2m_3} \otimes \left(T_{2m_2}(-a_2,0,a_2)
+a_2  B_{2m_2}^{Cyl}\right)\otimes J_{2m_1}
+\left(T_{2m_3}(-a_3,0,a_3) 
+a_3 B_{2m_3}^{Cyl}\right)\otimes J_{2m_2}\otimes J_{2m_1}.
\end{multline*}
Let $V_k$ be the similarity transformation defined in \eqref{eqn:UTVn}.
Then using the unitary transform $V_{2m_3}\otimes V_{2m_2} \otimes I_{2m_1}$,  
it is clear that 
\begin{align*}
\mathcal{K}_G\sim &
    I_{2m_3} \otimes I_{2m_2}\otimes  T_{2m_1}(-a_1,x,a_1)\\
&+a_1 \begin{pNiceArray}{ccc|ccc}[]
  &&e^{\iota\frac{\pi}{2m_3}}  & &  & \\
  &\iddots& &  &\text{\Huge{0}} &  \\
e^{\iota\frac{(2m_3-1)\pi}{2m_3}}&&&  & & \\
\hline
 &  &  &&&e^{\iota\frac{(2m_3+1)\pi}{2m_3}}\\
 &  \text{\Huge{0}}&  & &\iddots& \\
 &  &  & e^{\iota\frac{(4m_3-1)\pi}{2m_3}}& &\\
\end{pNiceArray}
\otimes 
\begin{pNiceArray}{c|c}[]
  0&  I_{m_2} \\
\hline
I_{m_2} & 0
\end{pNiceArray}
\otimes B_{2m_1}^{\text{Möb}}
\end{align*}
\begin{align*}
    +&
I_{2m_3} \otimes \mathrm{diag}\left(
    2\iota a_2\sin{\frac{(2\times1-1)\pi}{2m_2}},     \dots ,2\iota a_2\sin{\frac{(2\times2m_2-1)\pi}{2m_2}}\right)\otimes J_{2m_1}\\
+&\mathrm{diag}\left(
    2\iota a_3\sin{\frac{(2\times1-1)\pi}{2m_3}},     \dots ,2\iota a_3\sin{\frac{(2\times2m_3-1)\pi}{2m_3}}\right)\otimes 
    -\mathrm{adiag}\left(
    e^{\iota\frac{\pi}{2m_2}},
    \dots,
    e^{\iota\frac{(2\times2m_2-1)\pi}{2m_2}}
    \right)
    \\ 
    &\otimes J_{2m_1}.
\end{align*}
Using the following identities
\begin{align}
\label{eqn:TriId}
e^{\iota\frac{(2(n+k)-1)\pi}{2n}}&=-e^{\iota\frac{(2k-1)\pi}{2n}},\\
\sin{\frac{(2(n+k)-1)\pi}{2n}}
&=-\sin{\frac{(2k-1)\pi}{2n}},\nonumber
\end{align}
we get
\begin{multline*}
    \mathcal{K}_G\sim 
    I_2\otimes I_{m_3} \otimes I_{2m_2}\otimes  T_{2m_1}(-a_1,x,a_1)\\
+a_1 \begin{pmatrix}
    1&0\\
    0 &-1
\end{pmatrix}\otimes
\mathrm{adiag}\left(
e^{\iota\frac{\pi}{2m_3}},
\dots,
e^{\iota\frac{(2m_3-1)\pi}{2m_3}}
\right)
\otimes 
\begin{pNiceArray}{c|c}[]
  0&  I_{m_2} \\
\hline
I_{m_2} & 0
\end{pNiceArray}
\otimes B_{2m_1}^{\text{Möb}}\\
    +
I_2\otimes I_{m_3} \otimes \mathrm{diag}\left(
    2\iota a_2\sin{\frac{(2\times1-1)\pi}{2m_2}},     \dots ,2\iota a_2\sin{\frac{(2\times2m_2-1)\pi}{2m_2}}\right)\otimes J_{2m_1}\\
+\begin{pmatrix}
    1&0\\
    0 &-1
\end{pmatrix}\otimes
\mathrm{diag}\left(
    2\iota a_3\sin{\frac{(2\times1-1)\pi}{2m_3}},     \dots ,2\iota a_3\sin{\frac{(2m_3-1)\pi}{2m_3}}\right)
    \\
    \otimes
-\mathrm{adiag}\left(
    e^{\iota\frac{\pi}{2m_2}},
    \dots,
    e^{\iota\frac{(2\times2m_2-1)\pi}{2m_2}}
    \right)
    \otimes J_{2m_1}.
\end{multline*}
Observing that
\begin{equation} 
e^{\iota\frac{(2(n-k+1)-1)\pi}{2n}}=-e^{-\iota\frac{(2k-1)\pi}{2n}},
\end{equation}
\begin{equation}
\label{eqn:sin2theta}
\sin{\frac{(2(n-k+1)-1)\pi}{2n}}
=\sin{\frac{(2k-1)\pi}{2n}},
\end{equation}
and applying some simultaneous row and column interchanges on the second matrix of each tensor, turns it into a block diagonal matrix. 
In particular, if $m_3$ is even, 
\begin{multline}
\label{eqn:KG}
\mathcal{K}_G\sim 
    I_2\otimes I_{m_3} \otimes I_{2m_2}\otimes  T_{2m_1}(-a_1,x,a_1)\\
+a_1 \begin{pmatrix}
    1&0\\
    0 &-1
\end{pmatrix}\otimes
\begin{pNiceMatrix}
\Block[borders={bottom,right}]{2-2}{}
0&e^{\iota\frac{\pi}{2m_3}}& && \\
-e^{-\iota\frac{\pi}{2m_3}}&&&\text{\Huge{0}}& \\
&& \Block[]{1-1}{}
\ddots &  & \\
&\text{\Huge{0}}& & \Block[borders={left,top}]{2-2}{}
0&e^{\iota\frac{\left(2\times\frac{m_3}{2}-1\right)\pi}{2m_3}}\\
&&& -e^{-\iota\frac{\left(2\times\frac{m_3}{2}-1\right)\pi}{2m_3}}&0 
\end{pNiceMatrix}
\otimes 
\begin{pNiceArray}{c|c}[]
  0&  I_{m_2} \\
\hline
I_{m_2} & 0
\end{pNiceArray}
\otimes B_{2m_1}^{\text{Möb}}\\
    +
I_2\otimes I_{m_3} \otimes \mathrm{diag}\left(
    2\iota a_2\sin{\frac{(2\times1-1)\pi}{2m_2}},     \dots ,2\iota a_2\sin{\frac{(2\times2m_2-1)\pi}{2m_2}}\right)\otimes J_{2m_1}\\
+\begin{pmatrix}
    1&0\\
    0 &-1
\end{pmatrix}\otimes
2\iota a_3
\mathrm{diag}\left(
 \sin{\frac{\pi}{2m_3}},\sin{\frac{\pi}{2m_3}},\dots,\sin{\frac{(2\times\frac{m_3}{2}-1)\pi}{2m_3}},\sin{\frac{(2\times\frac{m_3}{2}-1)\pi}{2m_3}}
\right)\\
\otimes -\mathrm{adiag}\left(
    e^{\iota\frac{\pi}{2m_2}},
    \dots,
    e^{\iota\frac{(4m_2-1)\pi}{2m_2}}
    \right)
    \otimes J_{2m_1}.
\end{multline}
Since the first matrix of each tensor product in \eqref{eqn:KG} is diagonal and the second matrix is block diagonal, the determinant of the matrix $\mathcal{K}_G$ can be written as the following product.
\begin{multline*}
\det(\mathcal{K}_G)=
\prod_{i_3=1}^{\lfloor\frac{m_3}{2}\rfloor} 
    \det\Bigg(I_2\otimes I_{2} \otimes I_{2m_2}\otimes  T_{2m_1}(-a_1,x,a_1)\\
+a_1 \begin{pmatrix}
    1&0\\
    0 &-1
\end{pmatrix}\otimes
\begin{pmatrix}
    0&e^{\iota\frac{(2i_3-1)\pi}{2m_3}}\\
    -e^{-\iota\frac{(2i_3-1)\pi}{2m_3}}&0
\end{pmatrix}
\otimes 
\begin{pNiceArray}{c|c}[]
  0&  I_{m_2} \\
\hline
I_{m_2} & 0
\end{pNiceArray}
\otimes B_{2m_1}^{\text{Möb}}\\
    +
I_2\otimes I_{2} \otimes \mathrm{diag}\left(
    2\iota a_2\sin{\frac{(2\times1-1)\pi}{2m_2}},     \dots ,2\iota a_2\sin{\frac{(2\times2m_2-1)\pi}{2m_2}}\right)\otimes J_{2m_1}\\
+\begin{pmatrix}
    1&0\\
    0 &-1
\end{pmatrix}\otimes
2\iota a_3\begin{pmatrix}
    \sin{\frac{(2 i_3-1)\pi}{2m_3}}&0\\
    0&\sin{\frac{(2i_3-1)\pi}{2m_3}} 
\end{pmatrix}
\otimes 
-\mathrm{adiag}\left(
    e^{\iota\frac{\pi}{2m_2}},
    \dots,
    e^{\iota\frac{(4m_2-1)\pi}{2m_2}}
    \right)
\otimes J_{2m_1}\Bigg)\\
        \times
\begin{cases}
1 &\text{ if $m_3$ is even,}\\
 \det\Bigg(I_2\otimes I_{2m_2}\otimes  T_{2m_1}(-a_1,x,a_1)
 \\
+ \iota a_1 \begin{pmatrix}
    1&0\\
    0 &-1
\end{pmatrix}\otimes
\begin{pNiceArray}{c|c}[]
  0&  I_{m_2} \\
\hline
I_{m_2} & 0
\end{pNiceArray}
\otimes B_{2m_1}^{\text{Möb}}
&\text{ if $m_3$ is odd.}\\
    +
I_2\otimes \mathrm{diag}\left(
    2\iota a_2\sin{\frac{(2\times1-1)\pi}{2m_2}},     \dots ,2\iota a_2\sin{\frac{(2\times2m_2-1)\pi}{2m_2}}\right)\otimes J_{2m_1}\\
+2\iota a_3\begin{pmatrix}
    1&0\\
    0 &-1
\end{pmatrix}\otimes
-\mathrm{adiag}\left(
    e^{\iota\frac{\pi}{2m_2}},
    \dots,
    e^{\iota\frac{(4m_2-1)\pi}{2m_2}}
    \right)
\otimes J_{2m_1}\Bigg)
\end{cases}
\end{multline*}
Further, $\mathrm{adiag}\left(e^{\iota\frac{(2i_3-1)\pi}{2m_3}},-e^{-\iota\frac{(2i_3-1)\pi}{2m_3}}
\right)$ 
diagonalizes to $\mathrm{diag}\left(\iota,-\iota\right)$. Therefore,
\begin{multline*}
\det(\mathcal{K}_G)=
\prod_{i_3=1}^{\lfloor\frac{m_3}{2}\rfloor} 
    \det(I_2\otimes I_{2} \otimes I_{2m_2}\otimes  T_{2m_1}(-a_1,x,a_1)\\
+a_1 \begin{pmatrix}
    1&0\\
    0 &-1
\end{pmatrix}\otimes
\begin{pmatrix}
    \iota&0\\
    0&-\iota
\end{pmatrix}
\otimes 
\begin{pNiceArray}{c|c}[]
  0&  I_{m_2} \\
\hline
I_{m_2} & 0
\end{pNiceArray}
\otimes B_{2m_1}^{\text{Möb}}\\
    +
I_2\otimes I_{2} \otimes \mathrm{diag}\left(
    2\iota a_2\sin{\frac{(2\times1-1)\pi}{2m_2}},     \dots ,2\iota a_2\sin{\frac{(2\times2m_2-1)\pi}{2m_2}}\right)\otimes J_{2m_1}\\
+\begin{pmatrix}
    1&0\\
    0 &-1
\end{pmatrix}\otimes
2\iota a_3\sin{\frac{(2 i_3-1)\pi}{2m_3}}\begin{pmatrix}
    1&0\\
    0&1
\end{pmatrix}
\otimes 
-\mathrm{adiag}\left(
    e^{\iota\frac{\pi}{2m_2}},
    \dots,
    e^{\iota\frac{(4m_2-1)\pi}{2m_2}}
    \right)
\otimes J_{2m_1})
\end{multline*}
\begin{multline*}
    \times
\begin{cases}
1 &\text{ if $m_3$ is even,}\\
 \det(I_2\otimes I_{2m_2}\otimes  T_{2m_1}(-a_1,x,a_1)
+ \iota a_1 \begin{pmatrix}
    1&0\\
    0 &-1
\end{pmatrix}\otimes
\begin{pNiceArray}{c|c}[]
  0&  I_{m_2} \\
\hline
I_{m_2} & 0
\end{pNiceArray}
\otimes B_{2m_1}^{\text{Möb}}\\
    +
I_2\otimes \mathrm{diag}\left(
    2\iota a_2\sin{\frac{(2\times1-1)\pi}{2m_2}},     \dots ,2\iota a_2\sin{\frac{(2\times2m_2-1)\pi}{2m_2}}\right)\otimes J_{2m_1}\\
+2\iota a_3\begin{pmatrix}
    1&0\\
    0 &-1
\end{pmatrix}\otimes
-\mathrm{adiag}\left(
    e^{\iota\frac{\pi}{2m_2}},
    \dots,
    e^{\iota\frac{(4m_2-1)\pi}{2m_2}}
    \right)
\otimes J_{2m_1})
    &\text{ if $m_3$ is odd.}
\end{cases}
\end{multline*}
Now, since the initial two matrices of each tensor product are $2\times 2$ diagonal matrices, it is enough to determine the determinant of the following matrices:

\begin{multline*}
F_{i_3}^{\alpha,\beta}(\text{ for }\alpha=\pm 1,\beta=\pm 1)
=I_{2m_2}\otimes  T_{2m_1}(-a_1,x,a_1)
+\iota\alpha a_1 
\begin{pNiceArray}{c|c}[]
  0&  I_{m_2} \\
\hline
I_{m_2} & 0
\end{pNiceArray}
\otimes B_{2m_1}^{\text{Möb}}\\
    + \mathrm{diag}\left(
    2\iota a_2\sin{\frac{(2\times1-1)\pi}{2m_2}},     \dots ,2\iota a_2\sin{\frac{(2\times2m_2-1)\pi}{2m_2}}\right)\otimes J_{2m_1}\\
-2\iota \beta a_3\sin{\frac{(2 i_3-1)\pi}{2m_3}}
\mathrm{adiag}\left(
    e^{\iota\frac{\pi}{2m_2}},
    \dots,
    e^{\iota\frac{(4m_2-1)\pi}{2m_2}}
    \right)
\otimes J_{2m_1}.
\end{multline*}
We can rewrite $F_{i_3}^{\alpha,\beta}$ with the help of the trigonometric identities in \eqref{eqn:TriId}
as
\vspace{0cm}
\begin{multline*}
F_{i_3}^{\alpha,\beta}=I_{2}\otimes I_{m_2}\otimes  T_{2m_1}(-a_1,x,a_1)
+\iota \alpha a_1 \begin{pmatrix}
        0&1\\
1&0
    \end{pmatrix}
    \otimes I_{m_2}
\otimes B_{2m_1}^{\text{Möb}}\\
    + \begin{pmatrix}
        1&0\\
0&-1
    \end{pmatrix}
    \otimes \mathrm{diag}\left(
    2\iota a_2\sin{\frac{(2\times1-1)\pi}{2m_2}},     \dots ,2\iota a_2\sin{\frac{(2\times m_2-1)\pi}{2m_2}}\right)\otimes J_{2m_1}\\
-2\iota\beta a_3\sin{\frac{(2 i_3-1)\pi}{2m_3}}
\begin{pmatrix}
        0&1\\
-1&0
    \end{pmatrix}
\otimes
\mathrm{adiag}\left(
    e^{\iota\frac{\pi}{2m_2}},
    \dots,
    e^{\iota\frac{(2m_2-1)\pi}{2m_2}}
    \right)
\otimes J_{2m_1}.
\end{multline*}
Using the unitary transform $\frac{1}{\sqrt{2}}\begin{pmatrix}
    -1&1\\
    1 &1
\end{pmatrix}\otimes I_{m_2}\otimes I_{2m_1}$,
we obtain
\begin{multline*}
F_{i_3}^{\alpha,\beta}\sim I_{2}\otimes I_{m_2}\otimes  T_{2m_1}(-a_1,x,a_1)
+\iota\alpha a_1 \begin{pmatrix}
        -1&0\\
0&1
    \end{pmatrix}
    \otimes I_{m_2}
\otimes B_{2m_1}^{\text{Möb}}\\
    + \begin{pmatrix}
        0&-1\\
-1&0
    \end{pmatrix}
    \otimes \mathrm{diag}\left(
    2\iota a_2\sin{\frac{(2\times1-1)\pi}{2m_2}},     \dots ,2\iota a_2\sin{\frac{(2\times m_2-1)\pi}{2m_2}}\right)\otimes J_{2m_1}\\
-2\iota\beta a_3\sin{\frac{(2 i_3-1)\pi}{2m_3}}
\begin{pmatrix}
        0&-1\\
1&0
    \end{pmatrix}
\otimes
\mathrm{adiag}\left(
    e^{\iota\frac{\pi}{2m_2}},
    \dots,
    e^{\iota\frac{(2m_2-1)\pi}{2m_2}}
    \right)
\otimes J_{2m_1}.
\end{multline*}

Repeating the same procedure on the second 
matrix in each tensor if $m_2$ is even, results in
\begin{multline*}
F_{i_3}^{\alpha,\beta}\sim
I_2\otimes I_{m_2} \otimes  T_{2m_1}(-a_1,x,a_1)
    +\iota \alpha a_1 \begin{pmatrix}
        -1&0\\
0&1
    \end{pmatrix}
    \otimes I_{m_2}
\otimes B_{2m_1}^{\text{Möb}}\\
    + \begin{pmatrix}
        0&-1\\
-1&0
    \end{pmatrix}
    \otimes 
2\iota a_2
\mathrm{diag}\left(\sin{\frac{\pi}{2m_2}},\sin{\frac{\pi}{2m_2}},\dots,
\sin{\frac{(2\times \frac{m_2}{2}-1)\pi}{2m_2}},\sin{\frac{(2\times \frac{m_2}{2}-1)\pi}{2m_2}}
\right)
\otimes J_{2m_1}\\
-2\iota\beta a_3\sin{\frac{(2 i_3-1)\pi}{2m_3}}
\begin{pmatrix}
        0&-1\\
1&0
    \end{pmatrix}
\otimes
\mathrm{diag}\left(\iota,-\iota,\dots,\iota ,-\iota \right)
\otimes J_{2m_1}.
\end{multline*}
Further, notice that $F_{i_3}^{\alpha,\beta}$ is the product of determinants of $2\times 2$ block matrices of the following form
\[
\begin{pmatrix}
    A& B\\
    -\overline{B}& \overline{A}
\end{pmatrix}
\]
where
\begin{align*}
    A&=I_{m_2}\otimes T_{2m_1}(-a_1,x,a_1)
    -
    \iota\alpha a_1 I_{m_2}\otimes B_{2m_1}^{\text{Möb}},
\end{align*}
and
\begin{align*}
B&=\Big(
-\mathrm{diag}\left(2\iota a_2\sin{\frac{\pi}{2m_2}},2\iota a_2\sin{\frac{\pi}{2m_2}},\dots,2\iota a_2\sin{\frac{(2\times \frac{m_2}{2}-1)\pi}{2m_2}},2\iota a_2\sin{\frac{(2\times \frac{m_2}{2}-1)\pi}{2m_2}}\right)\\
&+2\iota\beta a_3\sin{\frac{(2 i_3-1)\pi}{2m_3}}
\mathrm{diag}\left(\iota,-\iota,\dots,\iota ,-\iota \right)
\Big)
\otimes
    J_{2m_{1}}.
\end{align*}
Using a similar argument as in the proof of \Cref{thm:PF3grid_Mob}, we can see that
$ \overline{B}A^*=\overline{A} \,\overline{B}$.
Now, using \Cref{lem:det2by2Block},
we get
\begin{equation}
\label{eqn:detABBconjAconjKlein}
\det\begin{pmatrix}
    A& B\\
    -\overline{B}& \overline{A}
\end{pmatrix}=\det{(AA^*+B{B}^*)}.
\end{equation}
Diagonalizing the matrix $A$ yields the following:
\begin{align*}
 A&=I_{m_2}\otimes \left(T_{2m_1}(-a_1,x,a_1)
    -
   \iota \alpha a_1 B_{2m_1}^{\text{Möb}}\right)   \\
    &\sim I_{m_2}\otimes \mathrm{diag}\left(x-2\iota\alpha a_1\sin{\frac{3\pi}{4m_1}},\dots,x-2\iota\alpha a_1\sin{\frac{(4(2m_1)-1)\pi}{4m_1}}\right)
\end{align*}
and
\begin{equation}
\label{eqn:AAstarKlein}
AA^*\sim I_{m_2}\otimes \mathrm{diag}\bigg(x^2+4a_1^2\sin^2{\frac{3\pi}{4m_1}},\dots,x^2+4a_1^2\sin^2{\frac{(8m_1-1)\pi}{4m_1}}\bigg).
\end{equation}
Further, a quick calculation shows that
\begin{multline}
\label{eqn:BBstarKlein}
BB^{*}=\Bigg(
\mathrm{diag}\left(4a_2^2
\sin^2{\frac{\pi}{2m_2}},4a_2^2\sin^2{\frac{\pi}{2m_2}},\dots,
4a_2^2\sin^2{\frac{(2\times \frac{m_2}{2}-1)\pi}{2m_2}},4a_2^2\sin^2{\frac{(2\times \frac{m_2}{2}-1)\pi}{2m_2}}
\right)\\
+4 a_3^2\sin^2{\frac{(2 i_3-1)\pi}{2m_3}}
I_{m_2}\Bigg)
\otimes I_{2m_1}.
\end{multline}
Combining equations \eqref{eqn:detABBconjAconjKlein}, \eqref{eqn:AAstarKlein}, and \eqref{eqn:BBstarKlein} results
\begin{multline*}
\det{F_{i_3}^{\alpha,\beta}}=\prod_{i_2=1}^{\lfloor \frac{m_2}{2}\rfloor}
\prod_{i_1=1}^{2m_1} \bigg(x^2+4a_1^2\sin^2{\frac{(4i_1-1)\pi}{4m_1}}+4a_2^2\sin^2{\frac{(2i_2-1)\pi}{2m_2}}+4a_3^2\sin^2{\frac{(2i_3-1)\pi}{2m_3}}\bigg)^2\\
\times
\begin{cases}
1&\text{ if $m_2$ is even,}\\
\prod_{i_1=1}^{2m_1} \bigg(x^2+4a_1^2\sin^2{\frac{(4i_1-1)\pi}{4m_1}}+4a_2^2\sin^2{\frac{((m_2+1)-1)\pi}{2m_2}}+4a_3^2\sin^2{\frac{(2i_3-1)\pi}{2m_3}}\bigg)& \text{ if $m_2$ is odd.}
\end{cases}
\end{multline*}
Hence,
\begin{multline*}
\det(\mathcal{K}_G)=
\prod_{i_3=1}^{\lfloor\frac{m_3}{2}\rfloor} 
\prod_{i_2=1}^{m_2}
\prod_{i_1=1}^{2m_1} \bigg(x^2
+4a_1^2\sin^2{\frac{(4i_1-1)\pi}{4m_1}}
+4a_2^2\sin^2{\frac{(2i_2-1)\pi}{2m_2}}
+4a_3^2\sin^2{\frac{(2i_3-1)\pi}{2m_3}}\bigg)^4\\
\times
\begin{cases}
    1&\text{$m_3$ is even,}\\
    \prod_{i_2=1}^{m_2}
\prod_{i_1=1}^{2m_1} \bigg(x^2
+4a_1^2\sin^2{\frac{(4i_1-1)\pi}{4m_1}}
+4a_2^2\sin^2{\frac{(2i_2-1)\pi}{2m_2}}
+4a_3^2\sin^2{\frac{((m_3+1)-1)\pi}{2m_3}}\bigg)^2
&\text{$m_3$ is odd,}
\end{cases}
\end{multline*}
which can be rewritten using the trigonometric identities \eqref{eqn:sin4theta} and \eqref{eqn:sin2theta} as
\begin{multline*}
\det(\mathcal{K}_G)=
\prod_{i_3=1}^{m_3} 
\prod_{i_2=1}^{m_2}
\prod_{i_1=1}^{m_1} \bigg(x^2
+4a_1^2\sin^2{\frac{(4i_1-1)\pi}{4m_1}}
+4a_2^2\sin^2{\frac{(2i_2-1)\pi}{2m_2}}
+4a_3^2\sin^2{\frac{(2i_3-1)\pi}{2m_3}}\bigg)^4.
\end{multline*}
\end{proof}
We conclude the section by providing an example that shows that the formula in \eqref{eqn:2dKlein} can only be generalised to three dimensions and by raising a pertinent question of whether is it possible to define the non-orientable boundary conditions such that the product formulas for higher dimensions still hold while preserving the relationship in \eqref{eqn:Cyl_mob3D_relation}.
    \begin{figure}[h!]
        \centering      \includegraphics[scale=0.85]{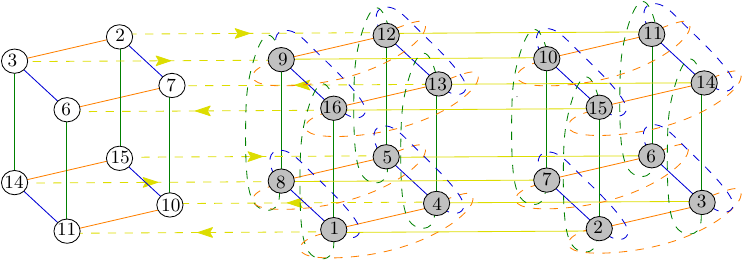}
        \caption{four-dimensional Klein grid graph, $Q^{\text{Klein}}_{2,2,2,2}$.}
    \label{fig:4DKlein}
    \end{figure}
\begin{example}
\label{exm:CEKleinHD}
    Let $G=Q^{\text{Klein}}_{2,2,2,2}$ be the four-dimensional Klein grid graph as shown in \Cref{fig:4DKlein}. Solid edges are oriented from a lower labelled vertex to a higher labelled vertex and dashed edges are oriented as described above.
    Let the vertex weight be $0$ for all the vertices and edge weights be $a_1,a_2,a_3$ and $a_4$ for the edges 
along the different coordinate axes. Then, the partition function of the monopole-dimer model on $G$ is 
\begin{equation*}
    2^{16}(a_1^2 + a_2^2 + a_3^2 + a_4^2)^4(a_2^2 + a_3^2 + a_4^2)^4
\end{equation*}
which is not an $8^{th}$ power. Hence the product formula \eqref{eqn:2dKlein} does not generalise to higher dimensions in obvious way.
\end{example}
\begin{remark}
    Again, note that the oriented $d$-dimensional Klein grid $Q^{\text{Klein}}_{2m_1,\dots,2m_d}$ can be regarded as the oriented Cartesian product of $P_{2m_1},C_{2m_2},\dots,C_{2m_d}$ (oriented from lower-labelled vertex to higher-labelled vertex) together with some additional dashed edges oriented in the specified manner. We believe (without proof) that the partition function of the monopole-dimer model remains unchanged regardless of the Pfaffian orientation on the path and cycle graphs.
\end{remark}
\section{acknowledgements}
The author thanks Prof. Arvind Ayyer for engaging in valuable and insightful discussions. Additionally, thanks are extended to the Prime Minister's Research Fellowship (PM-MHRD\textunderscore19\textunderscore17579) Scheme for providing funding support.

\bibliographystyle{alpha}
\bibliography{bib}

\begin{thebibliography}{MW73}

\bibitem[AA23]{Arora_Ayyer2023}
Anita Arora and Arvind Ayyer.
\newblock The monopole-dimer model on cartesian products of plane graphs.
\newblock {\em Combinatorial Theory}, 3(3), 2023.

\bibitem[Ayy15]{Ayyer2015ASM}
Arvind Ayyer.
\newblock A statistical model of current loops and magnetic monopoles.
\newblock {\em Math. Phys. Anal. Geom.}, 18(1):Art. 16, 19, 2015.

\bibitem[BP93]{BranandPrie1993}
J.G. Brankov and V.B. Priezzhev.
\newblock Critical free energy of a möbius strip.
\newblock {\em Nuclear Physics B}, 400(1):633--652, 1993.

\bibitem[Fis61]{Fisher}
Michael~E. Fisher.
\newblock Statistical mechanics of dimers on a plane lattice.
\newblock {\em Phys. Rev.}, 124:1664--1672, Dec 1961.

\bibitem[HL72]{Heilmann_Lieb_1972}
Ole~J. Heilmann and Elliott~H. Lieb.
\newblock Theory of monomer-dimer systems.
\newblock {\em Communications in Mathematical Physics}, 25:190--232, 1972.

\bibitem[HM70]{hammersley-menon-1970}
J.~M. Hammersley and V.~V. Menon.
\newblock A lower bound for the monomer-dimer problem.
\newblock {\em J. Inst. Math. Appl.}, 6:341--364, 1970.

\bibitem[Jer87]{jerrum1987}
Mark Jerrum.
\newblock Two-dimensional monomer-dimer systems are computationally intractable.
\newblock {\em Journal of Statistical Physics}, 48(1):121--134, 1987.

\bibitem[Kas61]{KASTELEYN19611209}
P.W. Kasteleyn.
\newblock The statistics of dimers on a lattice: I. the number of dimer arrangements on a quadratic lattice.
\newblock {\em Physica}, 27(12):1209--1225, 1961.

\bibitem[Kas63]{Kasteleyn1963}
P.~W. Kasteleyn.
\newblock Dimer statistics and phase transitions.
\newblock {\em Journal of Mathematical Physics}, 4(2):287--293, 1963.

\bibitem[LW99]{LUWu_1999}
W.T. Lu and F.Y. Wu.
\newblock Dimer statistics on the möbius strip and the klein bottle.
\newblock {\em Physics Letters A}, 259(2):108--114, 1999.

\bibitem[MW73]{McCoy&Wu_1973}
Barry~M. McCoy and Tai~Tsun Wu.
\newblock {\em The Two-Dimensional Ising Model}.
\newblock Harvard University Press, Cambridge, MA and London, England, 1973.

\bibitem[Tes00]{TESLER2000}
Glenn Tesler.
\newblock Matchings in graphs on non-orientable surfaces.
\newblock {\em Journal of Combinatorial Theory, Series B}, 78(2):198--231, 2000.

\bibitem[TF61]{TemperleyFisher}
H.~N.~V. Temperley and Michael~E. Fisher.
\newblock Dimer problem in statistical mechanics-an exact result.
\newblock {\em The Philosophical Magazine: A Journal of Theoretical Experimental and Applied Physics}, 6(68):1061--1063, 1961.

\end{thebibliography}

\end{document}